\newtheorem{theorem}{Theorem}[section]
\newtheorem{lemma}[theorem]{Lemma}
\newtheorem{proposition}[theorem]{Proposition}
\newtheorem{corollary}[theorem]{Corollary}
\theoremstyle{definition}
\newtheorem{definition}[theorem]{Definition}
\newtheorem{example}[theorem]{Example}
\title{RSK linear operators and the Vershik-Kerov-Logan-Shepp curve}
\author{Duy Phan and David Xia}
\address{Dept. of Mathematics, U. Illinois at Urbana-Champaign, Urbana, IL 61801, USA}
\email{duyphan2@illinois.edu, davidx3@illinois.edu}
\begin{document}

\maketitle

\begin{abstract}
Stelzer and Yong (2024) studied the Robinson–Schensted–Knuth (RSK) correspondence as a linear operator on the coordinate ring of matrices. They showed that this operator is block diagonal and conjectured that, in a special block, most diagonal entries vanish. We establish this conjecture by identifying these zeros with certain Schensted insertion interactions and analyzing them probabilistically using the Vershik-Kerov-Logan-Shepp Limit Shape Theorem.

\end{abstract}

\section{Introduction}
\subsection{The Schensted insertion algorithm} \label{subsection Schensted}
Given a permutation in one-line notation, the \emph{Schensted insertion} algorithm constructs a \emph{standard Young tableau} by sequentially inserting each number while maintaining increasing order along rows and columns \cite{schensted1961longest}. Starting with an empty tableau, each number is inserted into the first row. If it is larger than all entries in the row, it is appended to the end. Otherwise, it replaces the smallest greater number, which is \enquote{bumped} to the next row, where the process repeats. If a number is bumped from the last row, a new row is created.
\begin{example} 
\label{example intro}
The permutation $31254 \in S_5$ is processed as follows.
\ytableausetup{boxsize = normal} 
\[
\varnothing
\xrightarrow{\quad 1^{\text{st}}\quad}
\begin{ytableau}
    3
\end{ytableau}
\xrightarrow{\quad 2^{\text{nd}}\quad}
\begin{ytableau}
    3\\
    1
\end{ytableau}
\xrightarrow{\quad 3^{\text{rd}}\quad}
\begin{ytableau}
    3\\
    1 & 2
\end{ytableau}
\xrightarrow{\quad 4^{\text{th}}\quad}
\begin{ytableau}
    3\\
    1 & 2 & 5
\end{ytableau}
\xrightarrow{\quad 5^{\text{th}}\quad}
\begin{ytableau}
    3 & 5\\
    1 & 2 & 4
\end{ytableau}
    \]
\end{example}

We call a bump \emph{vertical} if it moves a number directly upward within the same column; otherwise, we call it \emph{lateral}. Since columns in standard Young tableaux (in French notation) are strictly increasing, a lateral bump always shifts a number strictly to the left. In Example~\ref{example intro}, the second insertion bumps $3$ from the first row and first column to the second row, but stays in the same column, so the bump is vertical. However, the fifth insertion bumps $5$ from the third column to the second column, making it lateral.

\medskip
Let $V_n$ denote the set of all permutations in $S_n$ whose Schensted insertion produces no lateral bumps. As we shall explain, the proof of the Stelzer-Yong conjecture (Theorem~\ref{conj}), follows from Theorem~\ref{main thm} below. Informally, Theorem~\ref{main thm} states that most permutations have a lateral bump in their Schensted insertion. We deduce this probabilistic statement from the celebrated Vershik-Kerov-Logan-Shepp Limit Shape Theorem concerning random partitions under the Plancherel measure.
\begin{theorem} 
\label{main thm}
$\lim_{n\to\infty}\frac{|V_n|}{n!} = 0$.
\end{theorem}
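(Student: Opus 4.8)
The plan is to understand the combinatorial structure of permutations in $V_n$ and then bound their number using the limit shape.

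First I would characterize what "no lateral bumps" means structurally. When we insert a number, if it bumps something, that something either moves straight up (vertical) or strictly left-and-up (lateral). If every bump is vertical, then whenever insertion of an entry enters row $i$, column $c$, and bumps an entry, that bumped entry had been sitting in column $c$ of row $i$ and lands in column $c$ of row $i+1$. Tracing this, I expect that "no lateral bumps ever occur" forces the insertion tableau $P$ to have the property that each column is built top-to-bottom in a rigid way; in fact I would try to show that the shape of $P$ must be such that the entire bump path is a single column, which strongly constrains $\lambda = \mathrm{shape}(P)$. A cleaner way to see it: a bump is lateral precisely when, upon entering some row, the incoming value displaces an entry that is not at the end of a column "stack" aligned with where it came from. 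I would look for an equivalent condition phrased purely in terms of the pair $(P,Q)$ under RSK, or better, a condition on $\lambda$ alone together with a bounded multiplicative factor — e.g. show $|V_n| \le (\text{number of shapes}) \cdot \max_\lambda (\text{something like } f^\lambda \cdot g(\lambda))$ where the extra factor is controlled.

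The key step is then to convert counting to probability. Under the Plancherel measure on $S_n$ (uniform permutations pushed to shapes via RSK), $\mathbb{P}(\lambda) = (f^\lambda)^2/n!$, so $|V_n|/n! = \mathbb{P}(\sigma \in V_n)$. If I can show $V_n$ membership forces $\lambda = \mathrm{shape}(\sigma)$ into some "thin" family $\mathcal{T}_n$ — say, partitions whose first row or first column is very long (this is the regime where columns can't create leftward room), or more precisely whose Young diagram is far from the limit shape — then I invoke the Vershik–Kerov–Logan–Shepp theorem: after scaling by $\sqrt{n}$, a Plancherel-random diagram converges in probability to the VKLS curve $\Omega$, which has bounded support. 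Hence the probability of landing in any family of shapes that escapes a fixed neighborhood of $\Omega$ (e.g. has a row or column of length $\ge (2+\varepsilon)\sqrt{n}$, or more subtly stays within a $o(\sqrt n)$-width strip) tends to $0$. I would aim to show $V_n \subseteq \{\sigma : \mathrm{shape}(\sigma)\in \mathcal{T}_n\}$ with $\mathbb{P}_{\mathrm{Planch}}(\mathcal{T}_n)\to 0$, which gives the theorem immediately; if instead $V_n$ splits into a bounded number of RSK-fibers over each such $\lambda$, I additionally use $f^\lambda/\sqrt{n!}\to 0$ uniformly.

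The main obstacle I anticipate is the first step: pinning down the exact combinatorial characterization of $V_n$ and showing it implies a geometric constraint on $\lambda$ that VKLS can kill. It is plausible that "no lateral bumps" does not by itself force $\lambda$ to be degenerate (a hook or a single row), but rather controls $\sigma$ more delicately — e.g. each new insertion's bump path is forced to be nearly vertical, which might translate to the recording tableau $Q$ lying in a restricted set rather than to a shape constraint. In that case I would instead bound $|V_n|$ by summing over shapes $\lambda$ of $f^\lambda \cdot (\#\{Q : (P,Q)\text{ realizable in }V_n\text{ for some }P\})$ and argue the inner count is $o(f^\lambda)$ except for shapes $\lambda$ of negligible Plancherel mass, again reducing to VKLS. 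I would also keep in mind the fallback of directly exhibiting, for a positive-probability event of permutations, an explicit lateral bump (e.g. using that a typical permutation has a long increasing subsequence of length $\approx 2\sqrt n$ creating a wide first row, hence eventually a value inserted into row $1$ at an interior position bumps an entry that must go left in row $2$), which is really the same VKLS input dressed combinatorially; making "eventually forces a lateral bump" quantitative enough to beat $n!$ is the crux either way.
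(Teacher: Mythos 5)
Your proposal is a plan rather than a proof, and the step you yourself flag as the crux is exactly the one that is missing. Your primary route --- showing $V_n \subseteq \{w : \mathsf{sh}(w) \in \mathcal{T}_n\}$ for a Plancherel-negligible family $\mathcal{T}_n$ of shapes --- is unsubstantiated, and there is no reason to expect it to hold: already $231, 312, 213 \in V_3$ all have the non-degenerate shape $(2,1)$, and nothing you write rules out that $V_n$ sits largely over typical VKLS-like shapes. More fundamentally, the Limit Shape Theorem controls the shape of a \emph{uniform} permutation; it says nothing about the conditional distribution of $\mathsf{sh}(w)$ given $w \in V_n$, so even a correct ``thin family'' characterization would need a separate transfer argument. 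Your final fallback --- exhibiting a lateral bump on a positive-probability event --- would at best give $\limsup |V_n|/n! \le 1-c$ for some $c>0$, not convergence to $0$; you correctly identify that making this ``quantitative enough to beat $n!$'' is the crux, but you do not resolve it.

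The paper's actual mechanism is different and supplies exactly the missing quantitative engine. It builds a rooted tree on $\bigcup_n V_n$ via the restriction map $\psi_n\colon V_{n+1}\to V_n$ (delete the last letter and flatten), so each $v\in V_n$ has at most $n+1$ children and hence $p_n=|V_n|/n!$ is monotone decreasing. The one combinatorial input is that if $\mathsf{sh}(v)$ has two columns of equal height, then appending a suitable value forces a lateral bump, so $v$ has at most $n$ children. VKLS is used only \emph{unconditionally}: all but a $\delta$-fraction of $S_n$ has first row of length at least $\sqrt{2n}$, which by pigeonhole forces two equal columns. Assuming for contradiction that $p_n\ge\epsilon>0$ and taking $\delta=\epsilon/2$, at least half of $V_n$ has at most $n$ children, giving $p_{n+1}\le\left(1-\tfrac{1}{2(n+1)}\right)p_n$, and the divergent product drives $p_n\to 0$, a contradiction. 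This bootstrapping through the contradiction hypothesis is precisely what allows an unconditional VKLS statement to be used against the conditioned set $V_n$ --- the ingredient your proposal lacks.
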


Theorem~\ref{main thm} is closely related to the results of Romik and \'Sniady~\cite{romik2016limit}, who analyzed the bumping route created by the last number in a permutation. Their work shows that the asymptotic shape of this route depends on the value of the last number, leading to a family of limiting curves. Theorem~\ref{main thm} seems derivable from their results (although we could not do so). Instead, we give a self-contained proof based only on the Limit Shape Theorem and elementary arguments.

\subsection{The Stelzer-Yong conjecture}
The RSK correspondence is a well-studied combinatorial bijection between matrices with non-negative integer entries and pairs of semistandard Young tableaux (SSYT) of the same shape. It is a more general version of the Schensted insertion algorithm; restricting the RSK correspondence to permutation matrices and taking the first SSYT of the pair yields exactly the Schensted insertion. For a combinatorial perspective on RSK, see Stanley \cite{stanley1999enumerative}; for its connections to representation theory, see Fulton \cite{fulton1997young}. Stelzer and Yong studied the RSK correspondence as a linear operator on the coordinate ring of matrix space, proving results on its diagonalizability, eigenvalues, trace, and determinant \cite{stelzer2024rsk}. In the same paper, they also conjecture the vanishment of most diagonal entries of a special block in the RSK linear operator (see \cite{stelzer2024rsk}, Conjecture 8.7). As mentioned above, this conjecture is affirmed by Theorem~\ref{main thm} (see Corollary~\ref{cor:conjecture}). To state their conjecture, we first describe the relevant notation from their work.

\medskip
Let $p,q \in \mathbb{N} \coloneqq \{0,1,2, \dots\}$, and let ${\sf Mat}_{p,q}\left(\mathbb{N}\right)$ denote the set of all $p \times q$ matrices with non-negative integer entries. Identify the coordinate ring of the space ${\sf Mat}_{p,q}\left(\mathbb{N}\right)$, given by
\[R_{p,q} \coloneqq \mathbb{C}[z_{i,j}]_{1 \leq i \leq p, 1 \leq j \leq q}.\]
The space $R_{p,q}$ has a \emph{monomial basis} given by
\[z^{\alpha} \coloneqq \prod_{1 \leq i \leq p, 1 \leq j \leq q} z_{i,j}^{\alpha_{i,j}},\]
where $\alpha=(\alpha_{ij}) \in {\sf Mat}_{p,q}(N)$ is the exponent matrix. Doubilet-Rota-Stein introduced another basis for the space $R_{p,q}$ called the \emph{bitableaux basis}, denoted by $\left[P \mid Q\right]$, corresponding to pairs $\left(P,Q\right)$ of semistandard Young tableaux of the same shape \cite{doubilet1974foundations}. Let $\left(P_{\alpha}, Q_{\alpha}\right)$ denote the image of the matrix $\alpha \in {\sf Mat}_{p,q}\left(\mathbb{N}\right)$ under the RSK correspondence. Stelzer and Yong studied a linear operator between these two bases of the space $R_{p,q}$:
\[{\sf RSK} \colon z^{\alpha} \mapsto \left[P_{\alpha} \mid Q_{\alpha}\right].\]

Let $\left(\sigma, \pi\right) \in \mathbb{N}^p \times \mathbb{N}^q$ satisfy $|\sigma|=|\pi|$, where $|\sigma| \coloneqq \sigma_1+\dots+\sigma_p$ and $|\pi| \coloneqq \pi_1+\dots+\pi_q$. Define ${\sf Mat}_{\sigma,\pi}(\mathbb{N})$ as the set of all matrices $\alpha \in {\sf Mat}_{p,q}\left(\mathbb{N}\right)$ whose row and column sums are given by $\left(\sigma, \pi\right)$, i.e.,
\[\sum_{j} \alpha_{i,j}=\sigma_i, \quad 1 \leq i \leq p;\]
\[\sum_{i} \alpha_{i,j}=\pi_j, \quad 1 \leq j \leq q.\]

The \emph{weight} of a Young tableau $T$ is a vector $\left(c_1,c_2,\dots \right)$, where $c_i$ denotes the number of times $i$ appears in $T$. Under the RSK correspondence, if $\alpha \in {\sf Mat}_{\sigma, \pi}$, then $\left( P_{\alpha}, Q_{\alpha} \right)$ has weights $\left( \sigma, \pi \right)$. We recall the notion of the \emph{weight space} (see \cite{stelzer2024rsk}, Section 1.2) associated with $\left(\sigma, \pi\right)$ as
\[R_{\sigma,\pi}= \text{Span} \{z^{\alpha} \mid \alpha \in {\sf Mat}_{\sigma,\pi}(\mathbb{N})\}.\]
The bitableaux $\left[ P \mid Q \right]$, where $\left( P,Q \right)$ has weights $\left( \sigma, \pi \right)$, form a linear basis of $R_{\sigma, \pi}$ (see \cite{stelzer2024rsk}, Lemma 2.5). Consequently, the restriction of the linear operator RSK to $R_{\sigma, \pi}$, denoted by
\[{\sf RSK}_{\sigma, \pi} \colon R_{\sigma, \pi} \to R_{\sigma, \pi},\]
is well-defined. The linear operator ${\sf RSK}_{\sigma,\pi}$ on the subspace $R_{\sigma, \pi}$ can be expressed as a matrix with respect to the monomial basis $\{z^{\alpha} \mid \alpha \in {\sf Mat}_{\sigma, \pi}\}$. The entries of this matrix are given by the function
\[{\sf RSK}_{\sigma,\pi}\left(\beta, \alpha\right)=\left[z^{\beta}\right] \left[P_{\alpha} \mid Q_{\alpha}\right],\]
for all $\alpha, \beta \in {\sf Mat}_{\sigma, \pi}$, where $\left[z^{\beta}\right] \left[P_{\alpha},Q_{\alpha}\right]$ denotes the coefficient of $z^{\beta}$ in $\left[P_{\alpha},Q_{\alpha}\right]$.

\medskip
Let \( 1^n \coloneqq (1, 1, \dots, 1) \); then the block \( {\sf RSK}_{1^n, 1^n} \) plays a significant role in the analysis of \cite{stelzer2024rsk} (see Section 4.1). In particular, Theorem 8.1 of their work shows that the trace of this operator provides leading-term information about the trace of \( {\sf RSK}_{p,q,n} \). Let
\[C_n= \{\alpha \in {\sf Mat}_{1^n,1^n} \mid {\sf RSK}_{1^n,1^n}(\alpha, \alpha)=0\}.\]
We prove the following theorem, originally conjectured in \cite{stelzer2024rsk} (Conjecture 8.7).

\begin{theorem} 
\label{conj}
$\lim_{n \to \infty} \frac{|C_n|}{n!}=1$.
\end{theorem}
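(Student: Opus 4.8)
The plan is to reduce Theorem~\ref{conj} to Theorem~\ref{main thm}. Identify ${\sf Mat}_{1^n,1^n}(\mathbb{N})$ with $S_n$ by sending a permutation matrix $\alpha$ to the permutation $w_\alpha$ determined by $\alpha_{i,j}=1 \iff w_\alpha(i)=j$; under this identification I would show $S_n\setminus C_n = V_n$ (in fact only the inclusion $S_n\setminus C_n\subseteq V_n$ is needed below). Granting this, Theorem~\ref{main thm} gives $|S_n\setminus C_n|/n! = |V_n|/n! \to 0$, so $|C_n|/n!\to 1$.

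First I would unwind the diagonal coefficient ${\sf RSK}_{1^n,1^n}(\alpha,\alpha)=[z^\alpha][P_\alpha\mid Q_\alpha]$ via the Doubilet--Rota--Stein formula: $[P\mid Q]$ is a product, over the columns $k$ of the common shape, of the minor of the matrix $(z_{i,j})$ on the row set and column set given by the $k$-th columns of $Q$ and of $P$. Since $\alpha\in{\sf Mat}_{1^n,1^n}(\mathbb{N})$, both $P_\alpha$ and $Q_\alpha$ are standard, so their columns partition $\{1,\dots,n\}$, and $z^\alpha=\prod_i z_{i,w_\alpha(i)}$ is squarefree of degree $n$. Expanding the product of minors into monomials, one checks that $z^\alpha$ occurs if and only if $w_\alpha$ carries $\mathrm{col}_k(Q_\alpha)$ onto $\mathrm{col}_k(P_\alpha)$ for every $k$, and that when it occurs it occurs in a single term, with coefficient $\pm 1$; in particular no cancellation is possible. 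Thus $\alpha\notin C_n$ if and only if this column-matching condition holds.

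The combinatorial core is the claim that the column-matching condition holds if and only if the Schensted insertion of $w_\alpha$ has no lateral bumps, i.e.\ if and only if $w_\alpha\in V_n$; only the implication ``matching $\Rightarrow$ no lateral bumps'' is strictly needed. If every bump in the insertion is vertical, then each value $w_\alpha(m)$ enters row~$1$ in some column $c$ and stays in column $c$ for the rest of the process, while the cell added at step $m$ lies in column $c$; hence $m\in\mathrm{col}_c(Q_\alpha)$ and $w_\alpha(m)\in\mathrm{col}_c(P_\alpha)$, and since $|\mathrm{col}_k(Q_\alpha)|=\lambda'_k=|\mathrm{col}_k(P_\alpha)|$ the induced injection $\mathrm{col}_k(Q_\alpha)\to\mathrm{col}_k(P_\alpha)$ is a bijection. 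For the converse, suppose some insertion has a lateral bump and let $j_0$ be the first such step. The entry displaced leftward by this bump is $w_\alpha(j')$ for some $j'<j_0$; since steps $1,\dots,j_0-1$ have no lateral bumps, $w_\alpha(j')$ entered row~$1$ in the column $c^*$ where step $j'$ created its cell and remained in column $c^*$ up to the point, during step $j_0$, where the lateral bump moves it to a strictly smaller column. As an entry's column is non-increasing under every subsequent insertion, $w_\alpha(j')$ ends in a column $<c^*$, so $j'\in\mathrm{col}_{c^*}(Q_\alpha)$ while $w_\alpha(j')\notin\mathrm{col}_{c^*}(P_\alpha)$; the matching condition fails at $k=c^*$ and $\alpha\in C_n$.

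Combining the two steps gives $S_n\setminus C_n=V_n$, and Theorem~\ref{main thm} finishes the proof since $|C_n|/n! = 1-|V_n|/n!\to 1$. I expect the main obstacle to be the converse direction of the combinatorial step: one must pin down the structural facts that an entry's column never increases during later insertions, that it strictly decreases exactly when that entry is laterally bumped, and hence that the displaced entry $w_\alpha(j')$ has a well-defined home column $c^*$ which it occupies continuously until step $j_0$ and never returns to afterward. The bideterminant bookkeeping — fixing the Doubilet--Rota--Stein orientation and checking the $\pm 1$/no-cancellation claim — is routine by comparison.
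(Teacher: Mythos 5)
Your proposal is correct and follows essentially the same route as the paper: reduce Theorem~\ref{conj} to Theorem~\ref{main thm} by showing that the diagonal coefficient ${\sf RSK}_{1^n,1^n}(\alpha,\alpha)$ vanishes exactly when $w_\alpha$ has a lateral bump (the paper's Lemma~\ref{equivalence lemma}), using the squarefreeness of $z^\alpha$ and the fact that the columns of the standard tableaux $P_\alpha,Q_\alpha$ partition $[n]$. The only cosmetic difference is that you locate a bad variable $z_{j',w_\alpha(j')}$ by tracking the entry displaced at the \emph{first} lateral bump, whereas the paper tracks the value inserted at the \emph{last} insertion causing a lateral bump; both arguments are valid.
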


\subsection{Organization of the Paper}

A key ingredient in our proof of Theorem~\ref{main thm} is the Limit Shape Theorem, which describes the asymptotic shape of a random Young diagram under the Plancherel measure. Independently established by Vershik–Kerov~\cite{vershik1977asymptotics} and Logan–Shepp~\cite{logan1977variational}, it has deep connections to combinatorics, probability, and representation theory. For historical context and further developments, see Kerov \cite{kerov2003asymptotic}. We revisit this theorem in Section~\ref{subsection limit shape theorem}, using Romik’s reformulation \cite{romik2015surprising}.

\medskip
This paper is organized as follows. In Section~\ref{subsection RSK}, we review the RSK correspondence. Section~\ref{subsection bitableaux} recalls the concept of bitableaux and explains why Theorem~\ref{conj} is equivalent to Theorem~\ref{main thm} (see Corollary~\ref{cor:conjecture}). In Section~\ref{subsection limit shape theorem}, we state the Limit Shape Theorem. Finally, in Section~\ref{section proof}, we provide a self-contained proof of Theorem~\ref{main thm} based solely on the Limit Shape Theorem and elementary arguments.

\section{Preliminaries} 
\label{prelim}
\subsection{RSK} 
\label{subsection RSK}
A \emph{partition} of a positive integer $n$ is a sequence $\lambda = (\lambda_1,\dots,\lambda_k)$ of positive integers such that $\lambda_1  \geq \dots \geq \lambda_k$ and $\lambda_1+ \dots + \lambda_k=n$. The set of partitions of $n$ is denoted by ${\sf Par}(n)$. A \emph{Young diagram} of \emph{shape} $\lambda$ (in French notation) is a collection of boxes arranged in left-justified rows, with row lengths given by $\lambda_1,\dots, \lambda_k$. A \emph{semistandard Young tableau} (SSYT) of shape $\lambda$ is a filling of the boxes of the Young diagram with positive integer entries $a_{i,j}$ satisfying the following two conditions.
 \begin{enumerate}[(i)]
\item $(a_{i,j})$ is weakly increasing along rows, i.e.,
\[a_{i,1} \leq a_{i,2} \leq \ldots \leq a_{i, \lambda_i}, \quad \text{for all} \quad 1 \leq i \leq k.\]
\item $(a_{i,j})$ is strictly increasing along columns, i.e.,
\[a_{1,j}<a_{2,j}< \dots , \quad \text{for all} \quad 1\leq j \leq \lambda_1.\]
 \end{enumerate}
A \emph{standard Young tableau} of shape $\lambda \in {\sf Par}(n)$ is a SSYT with the additional restriction that the entries are precisely $\{1,2,\dots,n\}$, with each appearing exactly once.
\begin{example}
Let $n=7$. The following is a SSYT of the shape $\lambda=(4,2,1)$.
\begin{center}
\ytableausetup{boxsize = normal} 
\begin{ytableau}
    a_{31}\\
    a_{21} & a_{22}\\
    a_{11} & a_{12} & a_{13} & a_{14}
\end{ytableau}
=
\begin{ytableau}
    4\\
    2 & 2\\
    1 & 1 & 1 & 3
\end{ytableau}
\end{center}
\end{example}

For a matrix $\alpha \in {\sf Mat}_{p,q}(\mathbb{N})$, we can construct a \emph{biword} of $\alpha$, denoted by ${\sf biword}(\alpha)$, by reading the entries $\alpha_{i,j}$ down the columns from left to right and recording them as $(ii \dots i \mid jj \dots j)$, where $\alpha_{i,j}$ is associated with the numbers $i$ and $j$ on each side.

\begin{example}
Consider
$\alpha =
\begin{pmatrix}
    1 & 0 & 2\\
    0 & 2 & 0\\
    1 & 1 & 0
\end{pmatrix}$. Then, we have
\[{\sf biword}(\alpha)=(1322311 \mid 1122233).\]
\end{example}

Given ${\sf biword}(\alpha)=(a_1,a_2,\dots ,a_n \mid b_1,b_2, \dots ,b_n)$, we can construct a pair of semistandard Young tableaux, known as the \emph{insertion} and \emph{recording} tableaux, using an algorithm called \emph{row insertion}. Row insertion is a more general form of Schensted insertion, and it proceeds as follows: Start with an empty insertion tableau and an empty recording tableau. At each step $i$, insert $a_i$ into the insertion tableau, beginning with the bottom row. If the row is empty, create a new box containing $a_i$. If the row is not empty and $a_i$ is larger than all entries in the row, create a new box at the end containing $a_i$. Otherwise, replace the leftmost number $x>a_i$ in the row and \emph{bump} $x$ into the row above in the same manner, repeating this process as necessary for all rows. Whenever a new box is created in the insertion tableau, create a corresponding box in the recording tableau in the same position, containing $b_i$. After $n$ steps, the resulting pair of tableaux is denoted $\left(P_{\alpha},Q_{\alpha}\right)$.

\begin{example} 
\label{rsk example}
Consider the ${\sf biword}(\alpha)=(1322311 \mid 1122233)$. The algorithm for constructing $(P_{\alpha},Q_{\alpha})$ is as follows.
\[
\left(\varnothing, \varnothing\right),
\left(\begin{ytableau}
    1
\end{ytableau},
\begin{ytableau}
    1
\end{ytableau}\right), \left(\begin{ytableau}
    1 & 3
\end{ytableau},
\begin{ytableau}
    1 & 1
\end{ytableau}\right), 
\left(
\begin{ytableau}
    3\\
    1 & 2
\end{ytableau},
\begin{ytableau}
    2\\
    1 & 1\\
\end{ytableau}
\right), \quad \quad
\]

\[
\left(
\begin{ytableau}
    3\\
    1 & 2 & 2
\end{ytableau},
\begin{ytableau}
    2\\
    1 & 1 & 2\\
\end{ytableau}
\right),
\left(
\begin{ytableau}
    3\\
    1 & 2 & 2 & 3
\end{ytableau},
\begin{ytableau}
    2\\
    1 & 1 & 2 & 2
\end{ytableau}
\right), \quad \quad \quad
\]

\[
\left(
\begin{ytableau}
    3\\
    2\\
    1 & 1 & 2 & 3
\end{ytableau},
\begin{ytableau}
    3\\
    2\\
    1 & 1 & 2 & 2\\
\end{ytableau}
\right),
\left(
\begin{ytableau}
    3\\
    2 & 2\\
    1 & 1 & 1 & 3
\end{ytableau},
\begin{ytableau}
    3\\
    2 & 3\\
    1 & 1 & 2 & 2\\
\end{ytableau}
\right) = (P_{\alpha},Q_{\alpha}).
\]
\end{example}

\subsection{Bitableaux and zero coefficients}
\label{subsection bitableaux}
For a pair $(P,Q)$ of SSYT of the same shape $\lambda$, with entries $\{a_{i,j}\}$ and $\{b_{i,j}\}$ respectively, we construct minors $\Delta_j$ for each $1 \leq j \leq \lambda_1$ as follows. Consider an infinite matrix with entries $\{z_{i,j}\}_{i,j \geq 1}$. For each $j$, we extract a square submatrix whose rows are indexed by the entries of column $j$ of $P$ and whose columns are indexed by the entries of column $j$ of $Q$. The determinant of this submatrix defines the minor $\Delta_j$. The \emph{bitableau} of $(P,Q)$ is then defined as the product of all minors $\Delta_j$, denoted by $\left[P \mid Q \right]$.

\begin{example}
Considering the pair of SSYT in Example~\ref{rsk example}, we have
\[[P_{\alpha} \mid Q_{\alpha}]= 
\begin{vmatrix}
    z_{11} & z_{12} & z_{13}\\
    z_{21} & z_{22} & z_{23}\\
    z_{31} & z_{32} & z_{33}
\end{vmatrix}.
\begin{vmatrix}
    z_{11} & z_{13}\\
    z_{21} & z_{23}
\end{vmatrix}.
\begin{vmatrix}
    z_{12}
\end{vmatrix}.
\begin{vmatrix}
    z_{32}
\end{vmatrix}.
\]
\end{example}

For each $\alpha \in {\sf Mat}_{1^n,1^n}$, the matrix $\alpha$ can be viewed as a permutation matrix represented by $w_{\alpha}=w_1 \dots w_n \in S_n$, where the entries satisfy $\alpha_{w_i,i}=1$ for all $i \in [n]$ and all other entries are zero. In this case, ${\sf biword}(\alpha)=(w_1, \dots, w_n \mid 1, \dots, n)$, so $(P_{\alpha},Q_{\alpha})$ is a pair of standard Young tableaux with weights $(1^n,1^n)$. The algorithm that constructs $P_{\alpha}$ is precisely the Schensted insertion of the permutation $w_{\alpha}$.

\begin{lemma} 
\label{equivalence lemma}
For $\alpha \in {\sf Mat}_{1^n,1^n}$, we have ${\sf RSK}_{1^n,1^n}(\alpha, \alpha)=0$ if and only if $w_{\alpha}$ has a lateral bump in its Schensted insertion.
\end{lemma}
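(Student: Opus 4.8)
The plan is to compute the diagonal entry ${\sf RSK}_{1^n,1^n}(\alpha,\alpha) = [z^\alpha][P_\alpha \mid Q_\alpha]$ directly from the definition of the bitableau as a product of minors, and to recognize exactly when the monomial $z^\alpha$ survives. Since $\alpha$ is a permutation matrix for $w_\alpha$, the monomial $z^\alpha = \prod_{i=1}^n z_{w_i,i}$ is squarefree, and the question is whether the unique permutation-matrix pattern it encodes appears with nonzero coefficient in $[P_\alpha \mid Q_\alpha] = \prod_j \Delta_j$. Here $\Delta_j$ is the minor of the generic matrix $(z_{i,j})$ with row set equal to the $j$-th column of $P_\alpha$ and column set equal to the $j$-th column of $Q_\alpha$. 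Since $Q_\alpha$ is the recording tableau of a permutation, its $j$-th column consists of consecutive-in-time labels only in a loose sense; the key structural fact I will use is that for $\alpha \in {\sf Mat}_{1^n,1^n}$ the entries of $P_\alpha$ and $Q_\alpha$ together are each $\{1,\dots,n\}$, and a monomial appearing in the product of minors corresponds to choosing one transversal from each $\Delta_j$ so that the chosen $z_{i,j}$'s, ranging over all minors, use each row index once and each column index once.

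First I would set up the bookkeeping: expanding $\prod_j \Delta_j$ gives a signed sum over tuples of permutations $(\tau_j)$, where $\tau_j$ is a bijection from the row set of $\Delta_j$ (the $j$-th column of $P_\alpha$) to the column set of $\Delta_j$ (the $j$-th column of $Q_\alpha$), contributing the monomial $\prod_j \prod_{p \in \mathrm{col}_j(P_\alpha)} z_{p,\tau_j(p)}$ with sign $\prod_j \mathrm{sgn}(\tau_j)$. For this to equal $z^\alpha = \prod_i z_{w_i, i}$ we need each column index $i \in [n]$ to be hit exactly once across all the $\tau_j$'s, and when it is hit — say in minor $\Delta_j$ via some row $p$ — we need $p = w_i$. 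The ``straight'' choice, where each $\tau_j$ is the order-preserving bijection between the (increasing) column entries of $P_\alpha$ and those of $Q_\alpha$, always yields $z^\alpha$ with sign $+1$: this is essentially the statement that $[P_\alpha \mid Q_\alpha]$ has leading term $z^\alpha$, which is how the RSK operator is triangular. So the diagonal entry is nonzero unless some other tuple $(\tau_j)$ also produces $z^\alpha$ and cancels it. Thus I need to characterize, in terms of the shape of Schensted insertion, when a non-straight transversal tuple can reproduce the same squarefree monomial $z^\alpha$.

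The heart of the matter is a local analysis of adjacent columns. A non-straight tuple that still produces $z^\alpha$ forces, in at least one minor $\Delta_j$, a choice $\tau_j$ differing from the order-preserving one; tracking which row–column pairs $(w_i,i)$ get reassigned, one finds that such an alternative is possible precisely when two columns $j$ and $j'$ of $(P_\alpha, Q_\alpha)$ ``overlap'' in a way that lets a row index migrate between minors — and I claim this overlap condition is exactly the occurrence of a lateral bump during the insertion of $w_\alpha$. Concretely: if every bump in the insertion is vertical, then each entry of $P_\alpha$ stays in the column it first landed in, the columns of $P_\alpha$ and $Q_\alpha$ are ``aligned'' (the $j$-th column of $P_\alpha$ records exactly the values that were ever placed in column $j$, and similarly for times), and one checks the only transversal tuple giving $z^\alpha$ is the straight one, so the coefficient is $\pm 1 \ne 0$. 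Conversely, a lateral bump moves some value from column $c$ to column $c-1$ at some time, which creates precisely the kind of row/column overlap between $\Delta_{c-1}$ and $\Delta_c$ that admits a second, sign-reversed transversal tuple reproducing $z^\alpha$, forcing the diagonal entry to vanish. I expect the main obstacle to be this converse direction — showing that a lateral bump \emph{necessarily} produces a cancelling term rather than just potentially allowing one — which will require carefully identifying the two minors involved, exhibiting the explicit alternative transversal, and verifying both that it yields the monomial $z^\alpha$ and that its sign is opposite to the straight term (and that no further terms interfere). Organizing the argument around a single lateral bump (e.g. the last one, or arguing by a minimal counterexample) and reducing to a $2 \times 2$ or $3 \times 3$ submatrix calculation in the relevant two columns should make this tractable.
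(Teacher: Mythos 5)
Your plan rests on a premise that is false and that steers the whole argument toward the wrong mechanism. You claim that the ``straight'' (order-preserving) transversal tuple always produces $z^\alpha$ with sign $+1$, so that the diagonal entry can vanish only through cancellation between several transversal tuples, and you attribute this to a leading-term/triangularity property of the RSK operator. Already for $w_\alpha=21$ this fails: $P_\alpha=Q_\alpha$ is a single column with entries $1,2$, the order-preserving transversal gives $z_{1,1}z_{2,2}$, which is not $z^\alpha=z_{2,1}z_{1,2}$, and the coefficient of $z^\alpha$ in $\Delta_1$ is $-1$, not $+1$. (If $z^\alpha$ really were always a leading term of $[P_\alpha\mid Q_\alpha]$, every diagonal entry would be nonzero and Theorem~\ref{conj} would be false, so no such triangularity statement can be what you are remembering.) More importantly, when a lateral bump occurs, $z^\alpha$ does not vanish by cancellation: it simply cannot be formed, because one of its variables is absent from every minor. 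Take the last insertion $w_i$ whose bumping chain contains a lateral bump. Then $w_i$ lands in column $k$ of the first row and, since all subsequent bumps are vertical, remains in column $k$ of the final $P_\alpha$; meanwhile the new box created at that step, which carries the label $i$ in $Q_\alpha$, sits in some column $l<k$. Since $P_\alpha$ and $Q_\alpha$ are standard, the variable $z_{w_i,i}$ could only appear in a minor $\Delta_j$ with $j=k$ and $j=l$ simultaneously, hence appears in no minor at all, and the coefficient of $z^\alpha$ in the product is $0$.

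This support argument replaces your entire ``overlap between adjacent columns, exhibit a second sign-reversed transversal, verify no further terms interfere'' program, which you yourself flag as the unresolved main obstacle; as proposed, that program would be hunting for a cancellation that is not the reason the entry vanishes, and I do not see how it could be completed as stated. Your no-lateral-bump direction is essentially right in spirit: each $z_{w_i,i}$ then lies in exactly one minor, the $n$ variables of the squarefree monomial $z^\alpha$ distribute among the minors with exactly the right multiplicities, and there is a unique transversal tuple assembling $z^\alpha$, so the coefficient is $\pm1$. But even there the unique tuple need not be the order-preserving one and the sign need not be $+1$, as the $w_\alpha=21$ example shows, so the conclusion should be stated as nonvanishing rather than as a normalized leading coefficient.
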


\begin{proof}
We observe that  
\[
z^\alpha = z_{w_1,1} z_{w_2,2} \dots z_{w_n,n}
\]
is a monomial of degree \( n \), and  
\[
\left[ P_{\alpha} \mid Q_{\alpha} \right] = \Delta_1 \Delta_2 \dots
\]
is a homogeneous polynomial of degree \( n \). Suppose that \( w_{\alpha} \) undergoes a lateral bump during its Schensted insertion. We consider the last insertion \( w_i \) that causes a lateral bump. Then \( w_i \) is inserted at the bottom of some \( k^{\text{th}} \) column of \( P_{\alpha} \), and the new box containing \( i \) is created in the \( l^{\text{th}} \) column of \( Q_{\alpha} \), where \( l < k \). By the choice of \( w_i \), any subsequent insertion \( w_{i+1}, \dots, w_n \) undergoes only vertical bumps, so \( w_i \) always remains in the \( k^{\text{th}} \) column of \( P_{\alpha} \). Since \( w_i \) is in the \( k^{\text{th}} \) column of \( P_{\alpha} \) while \( i \) is in the \( l^{\text{th}} \) column of \( Q_{\alpha} \), the variable \( z_{w_i,i} \) does not appear in any minor, implying that the coefficient of \( z^{\alpha} \) in \( \left[ P_{\alpha} \mid Q_{\alpha} \right] \) is zero.

Conversely, if $w_{\alpha}$ has no lateral bumps, then by a similar argument as above, each $z_{w_i,i}$ for $i \in [n]$ appears in a unique minor $\Delta_k$. Since $\left[ P_{\alpha} \mid Q_{\alpha} \right]$ is a homogeneous polynomial of degree $n$, constructing the degree-$n$ monomial $z^{\alpha}$ requires selecting exactly one subterm with coefficient $\pm 1$ from each $\Delta_k$. Here, a subterm refers to a product of some $z_{w_i,i}$ for $i \in [n]$. Therefore, the coefficient of $z^{\alpha}$ is $\pm 1$. \qedhere
\end{proof}

\begin{corollary}
\label{cor:conjecture}
$\lim_{n\to\infty}\frac{|V_n|}{n!} = 0$ if and only if $\lim_{n \to \infty} \frac{|C_n|}{n!}=1$.
\end{corollary}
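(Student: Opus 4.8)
The plan is to observe that Corollary~\ref{cor:conjecture} is an immediate consequence of Lemma~\ref{equivalence lemma} together with the standard identification of permutation matrices with permutations, so there is essentially nothing to prove beyond bookkeeping. First I would recall that the assignment $\alpha \mapsto w_\alpha$ is a bijection from ${\sf Mat}_{1^n,1^n}$ onto $S_n$; in particular $|{\sf Mat}_{1^n,1^n}| = n!$. Under this bijection, Lemma~\ref{equivalence lemma} states that $\alpha \in C_n$ if and only if $w_\alpha$ has a lateral bump in its Schensted insertion, i.e. if and only if $w_\alpha \notin V_n$. Hence the bijection $\alpha \mapsto w_\alpha$ carries $C_n$ onto the complement $S_n \setminus V_n$, giving $|C_n| = n! - |V_n|$.

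Dividing through by $n!$ yields $\frac{|C_n|}{n!} = 1 - \frac{|V_n|}{n!}$ for every $n \geq 1$, and the claimed equivalence of limits follows at once: the quantity $\frac{|C_n|}{n!}$ converges to $1$ precisely when $\frac{|V_n|}{n!}$ converges to $0$. I do not expect any obstacle in this argument; the entire combinatorial content is already packaged in Lemma~\ref{equivalence lemma}, and the substantive analytic work — establishing that $\frac{|V_n|}{n!} \to 0$, which is Theorem~\ref{main thm} — is carried out separately in Section~\ref{section proof} using the Limit Shape Theorem.
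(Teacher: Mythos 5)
Your proposal is correct and is essentially the paper's argument: the paper's proof is a one-line appeal to Lemma~\ref{equivalence lemma}, and you have simply spelled out the bookkeeping it leaves implicit, namely that the bijection $\alpha \mapsto w_\alpha$ identifies $C_n$ with $S_n \setminus V_n$, so $\frac{|C_n|}{n!} = 1 - \frac{|V_n|}{n!}$. No gap; your version is just more explicit than the paper's.
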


\begin{proof}
    Lemma~\ref{equivalence lemma} establishes that Theorem~\ref{conj} is equivalent to Theorem~\ref{main thm}. \qedhere
\end{proof}

\subsection{Limit Shape Theorem}
\label{subsection limit shape theorem}

The restriction of the RSK correspondence to pairs of standard Young tableaux is known as the Schensted correspondence, which provides a bijection between permutations and pairs of standard Young tableaux of the same shape. For $\lambda \in {\sf Par}(n)$, let $f_{\lambda}$ denote the number of standard Young tableaux of shape $\lambda$. Define the function
\[{\sf sh} \colon S_n \to {\sf Par}(n), \quad w \mapsto \lambda_w\]
where $\lambda_w$ is the shape of the tableau obtained by applying the Schensted insertion algorithm to $w$. Then, by the Schensted correspondence, we obtain
\[|{\sf sh}^{-1}(\lambda)|=|\{w \in S_n \mid {\sf sh}(w)=\lambda \}|=f_{\lambda}^2.\]
Now, consider a \emph{uniform probability measure} $\mathbb{P}_n$ on $S_n$, where
\[\mathbb{P}_n(w) \coloneqq \frac{1}{n!}, \quad \text{for all} \quad w \in S_n.\]
The \emph{Plancherel measure} $\mathbb{P}_n^*$ on ${\sf Par}(n)$ is defined as the pushforward measure of the uniform measure on $S_n$ via ${\sf sh}$ (see \cite{johansson2001discrete}), i.e.,
\[\mathbb{P}_n^* (\lambda) \coloneqq \mathbb{P}_n \left( {\sf sh} \left( w \right)=\lambda \right)=\frac{f_{\lambda}^2}{n!}, \quad \text{for all} \quad \lambda \in {\sf Par}(n).\]

We recall Romik’s reformulation of the Vershik-Kerov-Logan-Shepp Limit Shape Theorem in the standard Cartesian coordinate system from \cite{romik2015surprising}.

\begin{definition} The \emph{limit shape} $\Gamma$ in the $\mathbb{R}^2$ plane is the region (see Figure~\ref{limitshape}) bounded by the coordinate axes and the following parametric curve:
\[
\gamma = \left(\gamma_x, \gamma_y\right) \colon \left[-\frac{\pi}{2}, \frac{\pi}{2}\right] \longrightarrow \mathbb{R}^2,
\]
\[
\gamma_x\left(\theta\right) = \left( \frac{2\theta}{\pi} + 1 \right) \sin\theta + \frac{2}{\pi}\cos\theta,
\]
\[
\gamma_y\left(\theta\right) = \left( \frac{2\theta}{\pi} - 1 \right) \sin\theta + \frac{2}{\pi}\cos\theta.
\]
\end{definition}

\begin{figure}
    \centering
    \begin{tikzpicture}
        \begin{axis}[
    axis lines= middle,
    xtick={2}, 
    ytick={2}, 
    xmin=-0.25, xmax=2.25,
    ymin=-0.25, ymax=2.25,
    axis equal,
    width=6cm, height=6cm,
    ] 
    \addplot[name path=curve1, domain=-pi/2:pi/2, samples=100, thick, variable=\t] 
        ({((2*\t/pi) + 1) * sin(deg(\t)) + (2/pi) * cos(deg(\t))}, 
         {((2*\t/pi) - 1) * sin(deg(\t)) + (2/pi) * cos(deg(\t))}); 

    \addplot[name path=y1, domain=0:1.8,samples=10,
    thin,variable=\t]
        ({\t},
        {0.1});
    \addplot[name path=y2, domain=0:1.5,samples=10,
    thin,variable=\t]
        ({\t},
        {0.2});
    \addplot[name path=y3, domain=0:1.1,samples=10,
    thin,variable=\t]
        ({\t},
        {0.3});
    \addplot[name path=y4, domain=0:0.9,samples=10,
    thin,variable=\t]
        ({\t},
        {0.4});
    \addplot[name path=y5, domain=0:0.9,samples=10,
    thin,variable=\t]
        ({\t},
        {0.5});
    \addplot[name path=y6, domain=0:0.7,samples=10,
    thin,variable=\t]
        ({\t},
        {0.6});
    \addplot[name path=y7, domain=0:0.7,samples=10,
    thin,variable=\t]
        ({\t},
        {0.7});
    \addplot[name path=y8, domain=0:0.6,samples=10,
    thin,variable=\t]
        ({\t},
        {0.8});
    \addplot[name path=y9, domain=0:0.4,samples=10,
    thin,variable=\t]
        ({\t},
        {0.9});
    \addplot[name path=y11, domain=0:0.3,samples=10,
    thin,variable=\t]
        ({\t},
        {1});
    \addplot[name path=y12, domain=0:0.3,samples=10,
    thin,variable=\t]
        ({\t},
        {1.1});
    \addplot[name path=y13, domain=0:0.2,samples=10,
    thin,variable=\t]
        ({\t},
        {1.2});
    \addplot[name path=y14, domain=0:0.2,samples=10,
    thin,variable=\t]
        ({\t},
        {1.3});
    \addplot[name path=y15, domain=0:0.2,samples=10,
    thin,variable=\t]
        ({\t},
        {1.4});
    \addplot[name path=y16, domain=0:0.1,samples=10,
    thin,variable=\t]
        ({\t},
        {1.5});
    \addplot[name path=y17, domain=0:0.1,samples=10,
    thin,variable=\t]
        ({\t},
        {1.6});
    \addplot[name path=x1, domain=0:1.6,samples=10,thin,variable=\t]
        ({0.1},
        {\t});
    \addplot[name path=x2, domain=0:1.4,samples=10,thin,variable=\t]
        ({0.2},
        {\t});
    \addplot[name path=x3, domain=0:1.1,samples=10,thin,variable=\t]
        ({0.3},
        {\t});
    \addplot[name path=x4, domain=0:0.9,samples=10,thin,variable=\t]
        ({0.4},
        {\t});
    \addplot[name path=x5, domain=0:0.8,samples=10,thin,variable=\t]
        ({0.5},
        {\t});
    \addplot[name path=x6, domain=0:0.8,samples=10,thin,variable=\t]
        ({0.6},
        {\t});
    \addplot[name path=x7, domain=0:0.7,samples=10,thin,variable=\t]
        ({0.7},
        {\t});
    \addplot[name path=x8, domain=0:0.5,samples=10,thin,variable=\t]
        ({0.8},
        {\t});
    \addplot[name path=x9, domain=0:0.5,samples=10,thin,variable=\t]
        ({0.9},
        {\t});
    \addplot[name path=x10, domain=0:0.3,samples=10,thin,variable=\t]
        ({1},
        {\t});
    \addplot[name path=x11, domain=0:0.3,samples=10,thin,variable=\t]
        ({1.1},
        {\t});
    \addplot[name path=x12, domain=0:0.2,samples=10,thin,variable=\t]
        ({1.2},
        {\t});
    \addplot[name path=x13, domain=0:0.2,samples=10,thin,variable=\t]
        ({1.3},
        {\t});
    \addplot[name path=x14, domain=0:0.2,samples=10,thin,variable=\t]
        ({1.4},
        {\t});
    \addplot[name path=x15, domain=0:0.2,samples=10,thin,variable=\t]
        ({1.5},
        {\t});
    \addplot[name path=x16, domain=0:0.1,samples=10,thin,variable=\t]
        ({1.6},
        {\t});
    \addplot[name path=x17, domain=0:0.1,samples=10,thin,variable=\t]
        ({1.7},
        {\t});
    \addplot[name path=x18, domain=0:0.1,samples=10,thin,variable=\t]
        ({1.8},
        {\t});
        \end{axis}
    \end{tikzpicture}
    \caption{The limit shape \( \Gamma \) and a \( \frac{1}{10} \)-rescaled partition \( \lambda \in {\sf Par}(100) \).}
    \label{limitshape}
\end{figure}
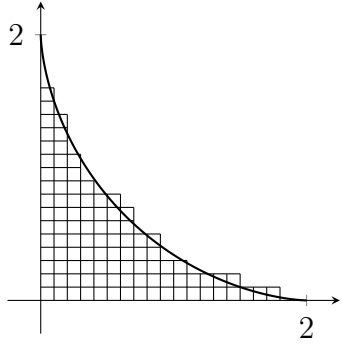

\begin{definition}
    Given a partition $\lambda = \left(\lambda_1, \cdots , \lambda_k\right)$, we define its \emph{planar set} as
    \[S_{\lambda} = \bigcup_{1\leq i\leq k, 1\leq j \leq \lambda_i}\left([i-1, i]\times[j-1,j]\right)\subseteq \mathbb{R}^2.\]
\end{definition}

\begin{example}
Consider $w=25143 \in S_5$. Then, ${\sf sh}(w) = \left(2,2,1\right)$. Its corresponding planar set is shown in Figure~\ref{fig:filled_squares}.
\end{example}

\begin{theorem}[Limit Shape Theorem, Theorem 1.26 of \cite{romik2015surprising}] 
\label{limit shape theorem}
For a fixed $\epsilon \in \left(0,1\right)$, under the Plancherel measure $\mathbb{P}_n^*$ on ${\sf Par}(n)$, we have
\[\mathbb{P}_n^* \left( \left(1-\epsilon\right) \Gamma \subseteq \frac{1}{\sqrt{n}} S_{\lambda} \subseteq \left(1+\epsilon\right) \Gamma \right) \longrightarrow 1 \quad \text{as} \quad n\longrightarrow \infty.\]
\end{theorem}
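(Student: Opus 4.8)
The plan is to prove this classical Vershik--Kerov--Logan--Shepp statement by the \emph{variational} method, passing to the rotated ``Russian'' coordinate system in which the computation is cleanest. Encode a diagram $\lambda$ by the piecewise-linear profile $\phi_\lambda$ of its boundary under the lattice rotation $(x,y)\mapsto(x-y,x+y)$: this is a $1$-Lipschitz function with $\phi_\lambda(u)=|u|$ outside a compact set. A direct change of variables shows that the event $(1-\epsilon)\Gamma\subseteq \tfrac{1}{\sqrt n}S_\lambda\subseteq(1+\epsilon)\Gamma$ is sandwiched between two uniform-profile events, so that — after adjusting $\epsilon$ and $\delta$ — it suffices to prove that the rescaled profile $u\mapsto \tfrac{1}{\sqrt n}\phi_\lambda(\sqrt n\,u)$ converges uniformly in $\mathbb{P}_n^*$-probability to the deterministic limit profile $\Omega(u)=\tfrac{2}{\pi}\bigl(u\arcsin(u/2)+\sqrt{4-u^2}\bigr)$ for $|u|\le 2$ and $\Omega(u)=|u|$ otherwise; one checks that the boundary of $\Gamma$ is exactly the Cartesian image of $\{(u,\Omega(u))\}$, so the two formulations match.

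First I would rewrite the Plancherel weight through the hook-length formula $f_\lambda=n!/\prod_{c}h(c)$, giving $\mathbb{P}_n^*(\lambda)=n!/\prod_c h(c)^2$, where $c$ ranges over the cells of $\lambda$ and $h(c)$ is the hook length. Taking logarithms, applying Stirling to $\log n!$, and rescaling the cells by $1/\sqrt n$ turns the sum $-2\sum_c\log h(c)$ into a Riemann sum converging to a double integral against the measure $\phi_\lambda''$. This produces the asymptotic $\log\mathbb{P}_n^*(\lambda)=-n\bigl(I(\phi_\lambda)+o(1)\bigr)$, where $I$ is a deterministic logarithmic-energy functional on admissible profiles, normalized so that $\min I=0$. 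The bookkeeping that the diagonal cells and the cells near the boundary of the diagram contribute only $o(n)$ to this expansion, uniformly over $\lambda$, is the first technical hurdle and requires care with the singularity of $\log h$.

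The heart of the proof is the variational problem for $I$. I would show that $I$ is convex and lower semicontinuous on the (Arzel\`a--Ascoli compact) set of $1$-Lipschitz admissible profiles, so it has a \emph{unique} minimizer $\Omega$. Writing the Euler--Lagrange condition yields a singular integral equation — a finite Hilbert transform equation for the minimizer's second derivative — whose solution, obtained from the standard Hilbert-transform inversion, is the arcsine-type density $\tfrac12\Omega''(du)=\tfrac{du}{\pi\sqrt{4-u^2}}$ on $[-2,2]$. Integrating twice recovers exactly $\Omega$, and verifying that this critical point is the global minimizer (not merely stationary) — the step that Logan--Shepp and Vershik--Kerov carried out independently — is, I expect, the main obstacle, as it rests on the potential-theoretic estimates underpinning logarithmic energy minimization.

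It remains to convert energy minimization into concentration. Since $I\ge0$ with equality only at $\Omega$, convexity and lower semicontinuity give a modulus $\eta(\delta)>0$ with $I(\phi_\lambda)\ge\eta(\delta)$ whenever $\sup_u|\tfrac{1}{\sqrt n}\phi_\lambda(\sqrt n\,u)-\Omega(u)|\ge\delta$; hence each such $\lambda$ has $\mathbb{P}_n^*(\lambda)\le e^{-n(\eta(\delta)-o(1))}$. Because the number of partitions satisfies $|{\sf Par}(n)|=e^{O(\sqrt n)}$, which is negligible against the exponential-in-$n$ decay, a union bound gives $\mathbb{P}_n^*\bigl(\sup_u|\tfrac{1}{\sqrt n}\phi_\lambda(\sqrt n\,u)-\Omega(u)|\ge\delta\bigr)\le e^{O(\sqrt n)}e^{-n(\eta(\delta)-o(1))}\to0$. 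Translating the uniform profile estimate back through the rotation to the planar sets $S_\lambda$ and the region $\Gamma$ yields the stated convergence of $\mathbb{P}_n^*$ to $1$.
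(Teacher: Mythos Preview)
The paper does not prove this theorem at all: it is quoted verbatim as Theorem~1.26 of Romik's book and used as a black box (the paper's only application is the substitution $\epsilon=1-\tfrac{\sqrt2}{2}$ in Lemma~3.9). So there is no ``paper's own proof'' to compare against.

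That said, your sketch is a faithful outline of the classical variational argument due to Logan--Shepp and Vershik--Kerov, essentially the proof Romik presents. The ingredients you list --- passing to rotated coordinates and the profile $\phi_\lambda$, extracting the hook-product energy functional via Stirling and Riemann sums, identifying the unique minimizer $\Omega$ through the finite Hilbert transform, and closing with the $e^{O(\sqrt n)}$ partition-count union bound --- are the right ones, and you flag the genuinely delicate points (uniformity of the $o(n)$ error in the energy expansion, and global rather than local minimality of $\Omega$). One small caution: the sandwiching between the planar-set event and the uniform-profile event is not quite two-sided for free, since near the axes the curve $\gamma$ meets the coordinate axes tangentially and a uniform $\delta$-neighborhood in profile coordinates does not translate directly to a $(1\pm\epsilon)$-dilation of $\Gamma$; Romik handles this with an auxiliary argument controlling the first row and column separately. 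Apart from that, your plan is the standard one and would work if carried out.
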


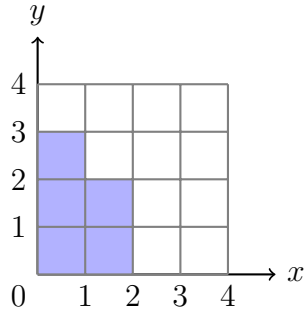
\begin{figure}[H]
\centering
\begin{tikzpicture}[x=1.5em, y=1.5em]

    \draw[thick,->] (0,0) -- (5,0) node[right] {$x$};
    \draw[thick,->] (0,0) -- (0,5) node[above] {$y$};
    
    \fill[blue!30] (0,0) rectangle (1,1); 
    \fill[blue!30] (1,0) rectangle (2,1); 
    \fill[blue!30] (1,1) rectangle (2,2); 
    \fill[blue!30] (0,1) rectangle (1,2); 
    \fill[blue!30] (0,2) rectangle (1,3); 

    \draw[thick, gray] (0,0) grid[xstep=1, ystep=1] (4,4);
    
    \node[below left] at (0,0) {$0$};
    \node[below] at (1,0) {$1$};
    \node[below] at (2,0) {$2$};
    \node[below] at (3,0) {$3$};
    \node[below] at (4,0) {$4$};
    \node[left] at (0,1) {$1$};
    \node[left] at (0,2) {$2$};
    \node[left] at (0,3) {$3$};
    \node[left] at (0,4) {$4$};
    
\end{tikzpicture}
\caption{Planar set of $\lambda=(2,2,1)$.}
\label{fig:filled_squares}
\end{figure}

\section{Proof of Theorem~\ref{main thm}}
\label{section proof}
\begin{definition}
An \( n \)-word \( a_1 a_2 \dots a_n \), where each \( a_i \in \mathbb{R} \), is called \emph{injective} if
\[
a_i \neq a_j \quad \text{for all} \quad 1 \leq i < j \leq n.
\]
\end{definition}

\begin{definition}
The \emph{flattening operation} {\sf Flat} on the set of injective \( n \)-words is given by
\[
{\sf Flat} \colon a_1 a_2 \dots a_n \mapsto b_1 b_2 \dots b_n,
\]
where $b_i \coloneqq \left| \left\{ j \in [n] \mid a_j \leq a_i \right\} \right|$. 
Evidently, \( b_1 b_2 \dots b_n\) is a permutation on $[n]$.
\end{definition}

\begin{definition}
Two injective \( n \)-words \( a_1 a_2 \dots a_n \) and \( c_1 c_2 \dots c_n \) are said to have the same \emph{relative order} if, for all \( 1 \leq i < j \leq n \), we have \( a_i < a_j \) if and only if \( c_i < c_j \). In this case, we also write \( a_1 a_2 \dots a_n \simeq c_1 c_2 \dots c_n \).
\end{definition}

\begin{lemma}
\label{lemma:flat}
For any injective \( n \)-word \( a_1 a_2 \dots a_n \), we have
\[
a_1 a_2 \dots a_n \simeq {\sf Flat}(a_1 a_2 \dots a_n).
\]
Moreover, for two injective \( n \)-words \( a_1 a_2 \dots a_n \) and \( c_1 c_2 \dots c_n \), we have
\[
a_1 a_2 \dots a_n \simeq c_1 c_2 \dots c_n \quad \text{if and only if} \quad
{\sf Flat}(a_1 a_2 \dots a_n) = {\sf Flat}(c_1 c_2 \dots c_n).
\]
\end{lemma}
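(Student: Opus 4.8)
The plan is to first record that the relative-order relation $\simeq$ is an equivalence relation on injective $n$-words --- reflexivity, symmetry, and transitivity are all immediate from the definition --- and then to prove the first assertion; the second assertion then follows essentially formally.

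For the first assertion, write $b_1 b_2 \dots b_n = {\sf Flat}(a_1 a_2 \dots a_n)$, so that $b_i = |\{j \in [n] : a_j \le a_i\}|$. The key point is that $i \mapsto a_i$ and $i \mapsto b_i$ induce the same order on indices. Fix distinct $i, k \in [n]$. By injectivity, either $a_i < a_k$ or $a_k < a_i$; suppose $a_i < a_k$. Then $\{j : a_j \le a_i\} \subseteq \{j : a_j \le a_k\}$, and the inclusion is strict because $k$ lies in the larger set (as $a_k \le a_k$) but not in the smaller one (as $a_k > a_i$). Hence $b_i < b_k$. By the symmetric argument, $a_k < a_i$ forces $b_k < b_i$. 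Thus $a_i < a_k$ if and only if $b_i < b_k$ for all $i \neq k$, which is exactly $a_1 a_2 \dots a_n \simeq b_1 b_2 \dots b_n$.

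For the ``if'' direction of the second assertion, suppose ${\sf Flat}(a_1 \dots a_n) = {\sf Flat}(c_1 \dots c_n)$. By the first assertion, $a_1 \dots a_n \simeq {\sf Flat}(a_1 \dots a_n) = {\sf Flat}(c_1 \dots c_n) \simeq c_1 \dots c_n$, so $a_1 \dots a_n \simeq c_1 \dots c_n$ by symmetry and transitivity of $\simeq$. For the ``only if'' direction, suppose $a_1 \dots a_n \simeq c_1 \dots c_n$. By injectivity of $a_1 \dots a_n$ we have, for each $i$, the disjoint union $\{j : a_j \le a_i\} = \{i\} \cup \{j : a_j < a_i\}$, so $\bigl({\sf Flat}(a_1 \dots a_n)\bigr)_i = 1 + |\{j : a_j < a_i\}|$; likewise $\bigl({\sf Flat}(c_1 \dots c_n)\bigr)_i = 1 + |\{j : c_j < c_i\}|$. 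Since $a_1 \dots a_n \simeq c_1 \dots c_n$, for $j \neq i$ we have $a_j < a_i$ iff $c_j < c_i$, so the sets $\{j : a_j < a_i\}$ and $\{j : c_j < c_i\}$ coincide; in particular they have equal cardinality, giving $\bigl({\sf Flat}(a_1 \dots a_n)\bigr)_i = \bigl({\sf Flat}(c_1 \dots c_n)\bigr)_i$ for every $i$.

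There is no serious obstacle here; the argument is elementary bookkeeping. The only point requiring care is the consistent use of injectivity to pass between the weak inequality $\le$ in the definition of ${\sf Flat}$ and the strict inequality $<$ appearing in the definition of $\simeq$, together with the preliminary (but essential) remark that $\simeq$ is transitive, which is what lets the ``if'' direction go through without touching the values $a_i$ and $c_i$ directly.
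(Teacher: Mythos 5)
Your proof is correct and follows essentially the same route as the paper: the first assertion is proved by the identical strict-inclusion argument on the sets $\{j : a_j \le a_i\}$, and the second assertion (which the paper dismisses with ``follows immediately'') is the natural formal consequence, which you spell out carefully and correctly. No gaps.
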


\begin{proof}
Let \( {\sf Flat}(a_1 a_2 \dots a_n) = b_1 b_2 \dots b_n \). By definition, if \( a_i < a_j \), then
\[
\left\{ k \mid a_k \leq a_i \right\} \sqcup \{a_j\} \subseteq \left\{ k \mid a_k \leq a_j \right\},
\]
so \( b_i < b_j \). Similarly, if \( a_i > a_j \), then \( b_i > b_j \). Hence,
\[ a_1 a_2 \dots a_n \simeq b_1 b_2 \dots b_n={\sf Flat}(a_1 a_2 \dots a_n).\]
The second claim follows immediately. \qedhere
\end{proof}

Now, for positive integer \( n \), we define a map \( \varphi_n \colon S_{n+1} \to S_n \) by
\[
\varphi_n(w_1 w_2 \dots w_n w_{n+1}) \coloneqq {\sf Flat}(w_1 w_2 \dots w_n).
\]

\begin{example}
Let \( w = 14352 \in S_5 \). Then,
\[
\varphi_5(14352) = {\sf Flat}(1435) = 1324 \in S_4.
\]
\end{example}

In the following discussion, for each $k \in \mathbb{Z}$, denote $k^* \coloneqq k+\frac{1}{2}$.

\begin{proposition}
\label{prop:relative-order}
If $\varphi_n(w_1 w_2 \dots w_n w_{n+1})=v_1v_2 \dots v_n$, then
\[w_1 w_2 \dots w_n w_{n+1} \simeq v_1v_2 \dots v_n \, k^*,\] where  $k=w_{n+1}-1$.
\end{proposition}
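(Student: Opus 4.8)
The plan is to verify the relative-order claim by checking all pairwise comparisons among the $n+1$ positions on the left with the corresponding $n+1$ positions $v_1, \dots, v_n, k^*$ on the right. Write $b_1 \dots b_n = {\sf Flat}(w_1 \dots w_n) = v_1 \dots v_n$. By Lemma~\ref{lemma:flat}, we already know $w_1 \dots w_n \simeq v_1 \dots v_n$, so all comparisons between two positions $i, j \in [n]$ behave identically on both sides. It therefore remains to compare the last position (value $w_{n+1}$ on the left, value $k^* = w_{n+1} - \tfrac12$ on the right) against each position $i \in [n]$.

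For this comparison, I would compute $v_i = b_i = |\{ j \in [n] : w_j \le w_i \}|$ explicitly in terms of how $w_i$ sits among $w_1, \dots, w_n$. The key observation is that $w_1, \dots, w_n$ is an injective word whose value set is $[n+1] \setminus \{w_{n+1}\}$, i.e., it omits exactly the value $w_{n+1}$. Hence for $w_i < w_{n+1}$ we have $b_i = w_i$ (no omitted value below $w_i$), while for $w_i > w_{n+1}$ we have $b_i = w_i - 1$ (exactly one omitted value, namely $w_{n+1}$, lies below $w_i$). Now compare with $k^* = w_{n+1} - \tfrac12$: if $w_i < w_{n+1}$ then $w_i \le w_{n+1} - 1$, so $b_i = w_i \le w_{n+1} - 1 < k^*$, matching $w_i < w_{n+1}$; if $w_i > w_{n+1}$ then $w_i \ge w_{n+1} + 1$, so $b_i = w_i - 1 \ge w_{n+1} > k^*$, matching $w_i > w_{n+1}$. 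In both cases the strict inequality between position $i$ and the last position has the same direction on the left and on the right, which is exactly what $\simeq$ requires (note the last entries $w_{n+1}$ and $k^*$ are never equal to any earlier entry, by injectivity on the left and since $k^*$ is a non-integer on the right).

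Combining the two cases, every pairwise comparison among the $n+1$ positions agrees between $w_1 \dots w_n w_{n+1}$ and $v_1 \dots v_n \, k^*$, so by definition $w_1 \dots w_n w_{n+1} \simeq v_1 \dots v_n \, k^*$. I do not anticipate a serious obstacle here; the only point requiring care is the bookkeeping of which value is omitted from $\{w_1, \dots, w_n\}$ and the resulting off-by-one in $b_i$, together with checking that the half-integer $k^*$ lands strictly between the integers $w_{n+1} - 1$ and $w_{n+1}$ so that it correctly "replaces" the missing value $w_{n+1}$ in the relative order. This last fact is precisely why the statement uses $k^* = (w_{n+1} - 1) + \tfrac12$ rather than an integer.
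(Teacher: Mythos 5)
Your proposal is correct and follows essentially the same route as the paper: both establish the case formula $v_i = w_i$ or $w_i - 1$ according to whether $w_i$ is below or above $w_{n+1}$, and then check that the half-integer $k^* = w_{n+1} - \tfrac12$ sits on the correct side of each $v_i$. Your write-up is in fact slightly more careful than the paper's, since you justify the case formula directly from the definition of ${\sf Flat}$ and the fact that $\{w_1,\dots,w_n\} = [n+1]\setminus\{w_{n+1}\}$, whereas the paper derives it somewhat tersely from Lemma~\ref{lemma:flat}.
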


\begin{proof}
Lemma~\ref{lemma:flat} implies that $w_1 w_2 \dots w_n \simeq v_1v_2 \dots v_n$. As a result,
\[ v_i =
\begin{cases}
w_i, & \text{if } w_i < w_{n+1}, \\
w_i - 1, & \text{if } w_i > w_{n+1},
\end{cases}
\]
for any $i \in [n]$. Therefore, if $w_i<w_{n+1}$, then $v_i < k^*$. Conversely, if $w_i>w_{n+1}$, we have $v_i > k^*$. Combining these with the fact that
\[w_1w_2 \dots w_n \simeq v_1v_2 \dots v_n,\]
we obtain the conclusion. \qedhere
\end{proof}

\begin{lemma}
\label{lemma:inverse}
For any positive integer \( n \) and any \( v \in S_n \), we have
\[
\left| \varphi_n^{-1}(v) \right| = n+1.
\]
\end{lemma}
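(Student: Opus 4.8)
The plan is to count the fibre $\varphi_n^{-1}(v)$ directly by stratifying it according to the last letter $w_{n+1}$ of a preimage. First I would record the elementary fact that ${\sf Flat}(v) = v$ for every $v \in S_n$: since $v$ is already a permutation of $[n]$, exactly $v_i$ of its letters are $\le v_i$, so the $i$-th letter of ${\sf Flat}(v)$ is $v_i$. Combining this with Lemma~\ref{lemma:flat}, a permutation $w = w_1 w_2 \dots w_{n+1} \in S_{n+1}$ lies in $\varphi_n^{-1}(v)$ if and only if $w_1 w_2 \dots w_n \simeq v_1 v_2 \dots v_n$.

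Next I would fix a value $m \in [n+1]$ and count the $w \in \varphi_n^{-1}(v)$ with $w_{n+1} = m$. Any such $w$ satisfies $\{w_1, \dots, w_n\} = [n+1] \setminus \{m\}$; writing $s_1 < s_2 < \dots < s_n$ for the elements of this set in increasing order, the relative-order condition $w_1 \dots w_n \simeq v_1 \dots v_n$ forces $w_i = s_{v_i}$ for each $i \in [n]$, because the letter in position $i$ must be the one of rank $v_i$ among $w_1, \dots, w_n$. Conversely, the word defined by $w_i \coloneqq s_{v_i}$ for $i \in [n]$ together with $w_{n+1} \coloneqq m$ is a genuine permutation of $[n+1]$ and satisfies $w_1 \dots w_n \simeq v_1 \dots v_n$, hence lies in $\varphi_n^{-1}(v)$. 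So there is exactly one preimage of $v$ with prescribed last letter $m$.

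Finally, since $w_{n+1}$ ranges over all of $[n+1]$ and each value is realized by precisely one element of $\varphi_n^{-1}(v)$, we conclude $|\varphi_n^{-1}(v)| = n+1$. There is no substantial obstacle here; the only point needing a moment's care is the claim that $w_1 \dots w_n$ is pinned down by the choice of $m$ — that is, that an injective word is recovered from its underlying set of values together with its relative order — which is immediate from the definition of $\simeq$. (One could instead phrase the relative-order condition using Proposition~\ref{prop:relative-order}, but that is not needed for the count.)
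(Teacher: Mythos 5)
Your proof is correct and follows essentially the same route as the paper: both arguments stratify the fibre by the last letter of the preimage and show that each of the $n+1$ possible values determines a unique element of $\varphi_n^{-1}(v)$. The only cosmetic difference is that the paper packages the unique preimage as ${\sf Flat}(v\,k^*)$ via Proposition~\ref{prop:relative-order}, whereas you write it out explicitly as $w_i = s_{v_i}$.
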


\begin{proof}
By Proposition~\ref{prop:relative-order}, each permutation $w \in \varphi_n^{-1}(v)$ is of the form ${\sf Flat}(v \, k^*)$ for $k =w_{n+1}-1 \in \{0,1, \dots, n\}$. Conversely, since both \( {\sf Flat} \) and \( \varphi_n \) preserve the relative order of the first \( n \) entries, we have
\[
\varphi_n\left( {\sf Flat}(v \, k^*) \right) = {\sf Flat}(v) = v \quad \text{for all} \quad k \in \{0, 1, \dots, n\}.
\]
Thus, \( {\sf Flat}(v \, k^*) \in \varphi_n^{-1}(v) \) for all \( k \in \{0, 1, \dots, n\} \). Since  the last entry of \( {\sf Flat}(v \, k^*)\) equals to $k+1$, those are \( n+1 \) distinct permutations in \( S_{n+1} \). Hence, \( |\varphi_n^{-1}(v)| = n+1 \). \qedhere
\end{proof}

\begin{example}
Consider \( v = 3124 \in S_4 \). To compute \( \varphi_4^{-1}(v) \), we append each of the values \( 0^*, 1^*, 2^*, 3^*, 4^* \) to the end of \( v \), and then apply {\sf Flat} to each resulting \( 5 \)-word. This yields the following five elements in \( \varphi_4^{-1}(v) \subseteq S_5 \):
\[
\begin{aligned}
{\sf Flat}(3124\,0^*) &= 42351, \\
{\sf Flat}(3124\,1^*) &= 41352, \\
{\sf Flat}(3124\,2^*) &= 41253, \\
{\sf Flat}(3124\,3^*) &= 31254, \\
{\sf Flat}(3124\,4^*) &= 31245.
\end{aligned}
\]
\end{example}

Recall from Section~\ref{subsection Schensted} that \( V_n \) is the set of all permutations in \( S_n \) whose Schensted insertion contains no lateral bumps.

\begin{lemma}
\label{lemma restriction}
For any positive integer \( n \), if \( w \in V_{n+1} \), then \( \varphi_n(w) \in V_n \).
\end{lemma}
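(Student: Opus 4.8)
The plan is to reduce the statement to the fact that the Schensted insertion of a prefix of a word is literally an initial segment of the Schensted insertion of the whole word, and then to transport the conclusion across the relation $\simeq$ using Lemma~\ref{lemma:flat}. Write $w = w_1 w_2 \cdots w_n w_{n+1} \in S_{n+1}$, so that $\varphi_n(w) = {\sf Flat}(w_1 \cdots w_n)$. First I would observe that Schensted insertion applied to the injective $n$-word $w_1 \cdots w_n$ performs exactly the same insertions and bumps, step for step, as the first $n$ steps of the Schensted insertion of $w_1 \cdots w_n w_{n+1}$: the algorithm only compares entries and creates or moves boxes, and it never consults the not-yet-inserted letter $w_{n+1}$. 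Hence if $w \in V_{n+1}$ — i.e. none of the $n+1$ insertion steps for $w$ produces a lateral bump — then in particular none of the first $n$ steps does, so the Schensted insertion of $w_1 \cdots w_n$ (as an injective $n$-word, for which the lateral/vertical distinction of Section~\ref{subsection Schensted} makes equally good sense) contains no lateral bump.

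Next I would pass from the word $w_1 \cdots w_n$ to the genuine permutation $\varphi_n(w) \in S_n$. By Lemma~\ref{lemma:flat} we have $w_1 \cdots w_n \simeq {\sf Flat}(w_1 \cdots w_n) = \varphi_n(w)$. Since every decision made during row insertion is a comparison $a_i < a_j$ between entries, two injective words with the same relative order have ``isomorphic'' insertion processes: at each step the same cell is created or vacated and the same box position is bumped, the only difference being which label occupies each cell. The property of a bump being \emph{lateral} — equivalently, \emph{not} vertical — is defined purely in terms of whether the box changes columns, so it is preserved under this isomorphism. Therefore the Schensted insertion of $\varphi_n(w)$ has a lateral bump if and only if that of $w_1 \cdots w_n$ does; combined with the previous paragraph this gives $\varphi_n(w) \in V_n$, as desired.

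The argument is short, and I expect the only point genuinely requiring an explicit sentence to be the invariance of ``lateral-ness'' under $\simeq$ — that is, the formalization of ``row insertion sees only the relative order of its input,'' which is what guarantees that applying {\sf Flat} neither creates nor destroys lateral bumps. Everything else is bookkeeping. It is worth emphasizing that the lemma deletes the \emph{last} letter of $w$ before flattening: truncating the input on the right just stops the algorithm one step early, which is what makes the prefix observation immediate, whereas removing an earlier letter would alter the entire insertion process and the same reduction would fail.
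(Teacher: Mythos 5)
Your proposal is correct and is essentially the paper's argument: both proofs combine the observation that inserting a prefix runs the first $n$ steps of the full insertion with the fact that lateral bumps depend only on the relative order of the entries. The only (cosmetic) difference is that you truncate first and then flatten via Lemma~\ref{lemma:flat}, whereas the paper flattens the full word first via Proposition~\ref{prop:relative-order} (comparing $w$ with $v_1\cdots v_n\,k^*$) and then truncates.
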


\begin{proof}
Let \( w = w_1 w_2 \dots w_{n+1} \in V_{n+1} \), and suppose \( \varphi_n(w) = v_1 v_2 \dots v_n \in S_n \). Let \( k = w_{n+1} - 1 \). By Proposition~\ref{prop:relative-order}, we have
\[
v_1 v_2 \dots v_n\, k^* \simeq w_1 w_2 \dots w_{n+1}.
\]
Since lateral bumps in Schensted insertion depend only on the relative order of entries, the absence of lateral bumps in \( w_1w_2 \dots w_{n+1} \) implies their absence in \( v_1v_2 \dots v_n \), i.e., 
\[ \varphi_n(w)= v_1 v_2 \dots v_n \in V_n . \qedhere\]
\end{proof}

Since \( \varphi_n(V_{n+1}) \subseteq V_n \) by Lemma~\ref{lemma restriction}, we can define the map \( \psi_n \) as the restriction of \( \varphi_n \) to the domain \( V_{n+1} \), that is, \( \psi_n \colon V_{n+1} \to V_n \). Considering all the maps \( \psi_n \) for \( n \in \mathbb{N} \), we obtain a \emph{rooted tree}, where the root is the identity permutation in \( S_1 \). (We adopt graph-theoretic terminology for convenience.) The rooted tree is illustrated in Figure~\ref{fig:tree}. For each \( w \in V_n \), we view the elements in \( \psi_n^{-1}(w) \subseteq V_{n+1} \) as the \emph{children} of \( w \). By Lemma~\ref{lemma:inverse}, we have
\begin{equation} \label{eq 1}
    |\psi_n^{-1}(w)| \leq n+1,
\end{equation}
and consequently,
\begin{equation} \label{eq 2}
    |V_{n+1}| \leq (n+1)\,|V_n|.
\end{equation}

\begin{figure}
    \centering
    \ytableausetup{boxsize=1.5em}
    \begin{tikzpicture}[
         every node/.style={anchor=center, font=\small, align=center},
         level distance=2.5cm,  
         level 1/.style={sibling distance=5cm},  
         level 2/.style={sibling distance=2cm}, 
         edge from parent/.style={draw, <-}
     ]
        \node at (-6, 0) {$V_1$};  
        \node at (-6, -2.5) {$V_2$};    
        \node at (-6, -5) {$V_3$};   

        \node at (-6, -6.5) {$\vdots$};
        \node at (-3.5, -6.5) {$\vdots$};
        \node at (-1.5, -6.5) {$\vdots$};
        \node at (0.5, -6.5) {$\vdots$};
        \node at (2.5, -6.5) {$\vdots$};
        \node at (4.5, -6.5) {$\vdots$};
        
         \node { 
             $w=1$\\[5pt]
             $\begin{ytableau}
                 1
             \end{ytableau}$
         }
         child { 
             node {
                 $w=12$\\[5pt]
                 $\begin{ytableau}
                     1 & 2
                 \end{ytableau}$
             }
             child { 
                 node {
                     $w=123$\\[5pt]
                     $\begin{ytableau}
                         1 & 2 & 3
                     \end{ytableau}$
                 }
             }
             child { 
                 node {
                     $w=231$\\[5pt]
                     $\begin{ytableau}
                        2\\
                         1 & 3
                     \end{ytableau}$
                 }
             }
         }
         child { 
             node {
                 $w=21$\\[5pt]
                 $\begin{ytableau}
                     2\\
                     1
                 \end{ytableau}$
             }
             child { 
                 node {
                     $w=321$\\[5pt]
                     $\begin{ytableau}
                         3\\
                         2\\
                         1
                     \end{ytableau}$
                 }
             }
             child { 
                 node {
                     $w=312$\\[5pt]
                     $\begin{ytableau}
                        3\\
                         1 & 2
                     \end{ytableau}$
                 }
             }
             child { 
                 node {
                     $w=213$\\[5pt]
                     $\begin{ytableau}
                        2\\
                         1 & 3
                     \end{ytableau}$
                 }
             }
         };
        
     \end{tikzpicture}
    \caption{The rooted tree of permutations with no lateral bumps.}
    \label{fig:tree}
\end{figure}
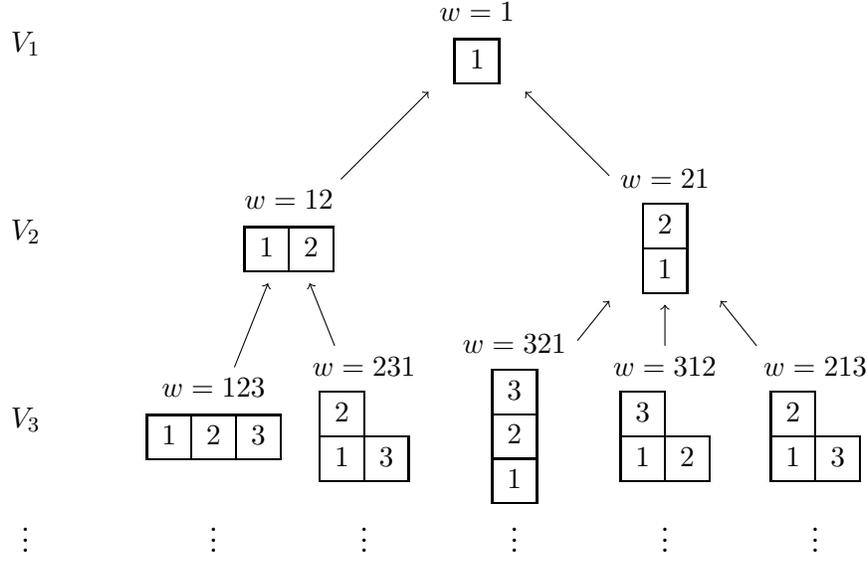

For \( \lambda \in {\sf Par}(n) \), suppose that \( \lambda = (\lambda_1, \dots, \lambda_k) \). We define \( L(\lambda) \coloneqq \lambda_1 \) as the length of the first row of the shape \( \lambda \). For example, for \( \lambda = (2, 2, 1) \), we have \( L(\lambda) = 2 \). Recall the Plancherel measure from Section~\ref{subsection limit shape theorem}, and we state the following lemma.

\begin{lemma}
\label{L(lambda) lemma}
Under the Plancherel measure \( \mathbb{P}_n^* \) on \( {\sf Par}(n) \), we have
\[
\mathbb{P}_n^*\left(L(\lambda) \geq \sqrt{2n}\right) \longrightarrow 1 \quad \text{as} \quad n \longrightarrow \infty.
\]
\end{lemma}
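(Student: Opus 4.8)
The plan is to extract the bound from the Limit Shape Theorem (Theorem~\ref{limit shape theorem}) using a single, well-chosen test point. The starting observation is that $L(\lambda)$ governs how high the planar set reaches: if $(a,b)\in S_\mu$ for any partition $\mu$, then $b$ lies in some interval $[j-1,j]$ with $j\le\mu_i\le\mu_1$, so $b\le\mu_1=L(\mu)$; rescaling, $(a,b)\in\frac1{\sqrt n}S_\lambda$ forces $\sqrt n\,b\le L(\lambda)$. Hence it suffices to produce, with probability tending to $1$, a point of $\frac1{\sqrt n}S_\lambda$ whose $y$-coordinate is at least $\sqrt2$, and for that I will use a fixed point of the shrunken limit region $(1-\epsilon)\Gamma$, whose top sits at height $2(1-\epsilon)$, just below $2$.

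First I would record the elementary behaviour of $\gamma$. Differentiating gives $\gamma_x'(\theta)=\bigl(\tfrac{2\theta}\pi+1\bigr)\cos\theta$ and $\gamma_y'(\theta)=\bigl(\tfrac{2\theta}\pi-1\bigr)\cos\theta$, so on $\bigl[-\tfrac\pi2,\tfrac\pi2\bigr]$ the coordinate $\gamma_x$ is nondecreasing with $\gamma_x(-\tfrac\pi2)=0$, while $\gamma_y$ is nonincreasing with $\gamma_y(-\tfrac\pi2)=2$ and $\gamma_y(\tfrac\pi2)=0$; in particular both remain nonnegative. Thus $\gamma$ runs inside the closed first quadrant from $(0,2)$ to $(2,0)$, the region $\Gamma$ is a well-defined closed set with $\gamma$ on its boundary, and $\gamma_y$ is continuous on $\bigl[-\tfrac\pi2,\tfrac\pi2\bigr]$ with maximal value $\gamma_y(-\tfrac\pi2)=2$.

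Next I would fix $\epsilon\in\bigl(0,\,1-\tfrac1{\sqrt2}\bigr)$, so that $\sqrt2<2(1-\epsilon)$, and then — using continuity of $\gamma_y$ together with $\gamma_y(-\tfrac\pi2)=2$ — choose $\theta_0\in\bigl(-\tfrac\pi2,\tfrac\pi2\bigr)$ with $(1-\epsilon)\gamma_y(\theta_0)>\sqrt2$ (any $\theta_0$ sufficiently close to $-\tfrac\pi2$ works). Set $P\coloneqq(1-\epsilon)\gamma(\theta_0)$; since $\gamma(\theta_0)\in\Gamma$, this gives $P\in(1-\epsilon)\Gamma$. Let $E_n$ denote the event $(1-\epsilon)\Gamma\subseteq\frac1{\sqrt n}S_\lambda\subseteq(1+\epsilon)\Gamma$, so that $\mathbb{P}_n^*(E_n)\to1$ by Theorem~\ref{limit shape theorem}. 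On $E_n$ the point $P$ lies in $\frac1{\sqrt n}S_\lambda$, and the opening observation then yields $L(\lambda)\ge\sqrt n\cdot(1-\epsilon)\gamma_y(\theta_0)>\sqrt n\,\sqrt2=\sqrt{2n}$. Hence $E_n\subseteq\{L(\lambda)\ge\sqrt{2n}\}$ for every $n$, and monotonicity of probability completes the argument. The only step that needs any thought is the choice of test point — one must land a point of $(1-\epsilon)\Gamma$ at height exceeding $\sqrt2$, which is possible exactly because the top of $\Gamma$ is at height $2>\sqrt2$ — so I do not anticipate a real obstacle, only the minor bookkeeping of confirming the shape of $\Gamma$ near $(0,2)$.
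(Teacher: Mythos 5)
Your proposal is correct and follows essentially the same route as the paper: apply the lower containment $(1-\epsilon)\Gamma\subseteq\frac{1}{\sqrt n}S_\lambda$ from Theorem~\ref{limit shape theorem} with $\epsilon$ chosen so that the shrunken limit shape still reaches height at least $\sqrt2$ (the paper takes $\epsilon=1-\tfrac{\sqrt2}{2}$ and the point $(0,\sqrt2)$), which forces $L(\lambda)\ge\sqrt{2n}$ on that event. You simply spell out in more detail the step the paper leaves implicit, namely why a point of $\frac{1}{\sqrt n}S_\lambda$ at height $\ge\sqrt2$ bounds $L(\lambda)$ from below.
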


\begin{proof}
Substituting \( \epsilon = 1 - \frac{\sqrt{2}}{2} \) in Theorem~\ref{limit shape theorem}, which implies
\[
\mathbb{P}_n^*\left( \frac{\sqrt{2}}{2} \Gamma \subseteq \frac{1}{\sqrt{n}} S_{\lambda} \right) \longrightarrow 1 \quad \text{as} \quad n \longrightarrow \infty.
\]
If \( \frac{\sqrt{2}}{2} \Gamma \subseteq \frac{1}{\sqrt{n}} S_{\lambda} \), then \( L(\lambda) \geq \sqrt{2n} \).
Therefore, we have
\[
\mathbb{P}_n^*\left( L(\lambda) \geq \sqrt{2n} \right) \geq \mathbb{P}_n^*\left( \frac{\sqrt{2}}{2} \Gamma \subseteq \frac{1}{\sqrt{n}} S_{\lambda} \right).
\]
By the squeeze theorem, it follows that
\[
\mathbb{P}_n^*\left( L(\lambda) \geq \sqrt{2n} \right) \longrightarrow 1 \quad \text{as} \quad n \longrightarrow \infty. \qedhere
\]
\end{proof}

\begin{lemma} 
\label{same height}
For any \( \lambda \in {\sf Par}(n) \), if \( L(\lambda) \geq \sqrt{2n} \), then the shape \( \lambda \) has two columns of the same height.
\end{lemma}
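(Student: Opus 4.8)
The plan is to prove the contrapositive: if every column of $\lambda$ has a distinct height, then $L(\lambda) < \sqrt{2n}$. The whole argument is an elementary triangular-number estimate obtained by reading the Young diagram column by column rather than row by row.

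First I would set up the bookkeeping. The Young diagram of $\lambda$ has exactly $L(\lambda) = \lambda_1$ columns, and the height of its $j$-th column is $\lambda'_j$, the $j$-th part of the conjugate partition; these heights satisfy $\lambda'_1 \geq \lambda'_2 \geq \dots \geq \lambda'_{\lambda_1} \geq 1$. Assume for contradiction that no two columns have equal height. Then $\lambda'_1 > \lambda'_2 > \dots > \lambda'_{\lambda_1} \geq 1$ is a strictly decreasing sequence of $\lambda_1$ positive integers, which forces $\lambda'_j \geq \lambda_1 - j + 1$ for each $1 \leq j \leq \lambda_1$ (equivalently, the diagram of $\lambda$ contains the staircase shape $(\lambda_1, \lambda_1 - 1, \dots, 1)$ after transposing).

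Next I would sum these lower bounds, using that the number of boxes of $\lambda$ equals the total of its column heights:
\[
n \;=\; \sum_{j=1}^{\lambda_1} \lambda'_j \;\geq\; \sum_{j=1}^{\lambda_1} (\lambda_1 - j + 1) \;=\; \frac{\lambda_1(\lambda_1 + 1)}{2} \;>\; \frac{\lambda_1^2}{2},
\]
where the strict inequality uses $\lambda_1 \geq 1$ (true since $n \geq 1$). Rearranging yields $L(\lambda) = \lambda_1 < \sqrt{2n}$, contradicting the hypothesis $L(\lambda) \geq \sqrt{2n}$. Hence two columns must share a height.

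I do not anticipate a real obstacle: the only slightly delicate point is justifying the inequality $\lambda'_j \geq \lambda_1 - j + 1$ from the distinctness assumption, which is just the observation that a strictly decreasing sequence of positive integers of length $\lambda_1$ is pointwise at least the reversed sequence $\lambda_1, \lambda_1 - 1, \dots, 1$; everything after that is a routine summation.
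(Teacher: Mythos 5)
Your proposal is correct and is essentially the paper's own argument: both assume all column heights are distinct, bound $n$ below by the sum of the $L(\lambda)$ smallest distinct positive integers, $\tfrac{1}{2}L(\lambda)(L(\lambda)+1) > \tfrac{1}{2}L(\lambda)^2$, and derive the contradiction $L(\lambda) < \sqrt{2n}$. Your extra bookkeeping with the conjugate partition $\lambda'$ just makes explicit the step the paper states in one line.
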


\begin{proof}
Suppose all \( L(\lambda) \) columns of \( \lambda \) have distinct heights. Then the total number of boxes in \( \lambda \) is at least the sum of the \( L(\lambda) \) smallest distinct positive integers:
\[
|\lambda| \geq 1 + 2 + \cdots + L(\lambda) = \frac{1}{2} L(\lambda)\left(L(\lambda)+1\right).
\]
But since \( |\lambda| = n \), this implies
\[
n \geq \frac{1}{2} L(\lambda)\left(L(\lambda)+1\right) > \frac{1}{2} L(\lambda)^2.
\]
Hence, $ L(\lambda) < \sqrt{2n}$. This contradicts the assumption that \( L(\lambda) \geq \sqrt{2n} \), so \( \lambda \) must have at least two columns of the same height. \qedhere
\end{proof}

\begin{figure}
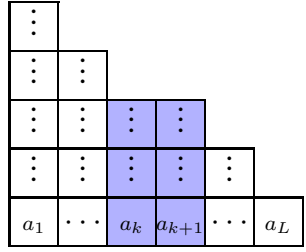

\ytableausetup{boxsize=1.5em}
    \centering
    \[
    \begin{ytableau}
        \vdots \\
        \vdots & \vdots \\
        \vdots & \vdots & *(blue!30) \vdots & *(blue!30) \vdots\\
        \vdots & \vdots & *(blue!30) \vdots & *(blue!30) \vdots & \vdots\\
        {\scriptstyle a_1}  & \cdots  & *(blue!30) {\scriptstyle a_k} & *(blue!30) {\scriptstyle a_{k+1}} & \cdots & {\scriptstyle a_{L}}
    \end{ytableau}
    \]
    \caption{Example $\lambda$ with 2 columns, $k$ and $k+1$, of the same height.}
    \label{fig:2column}
\end{figure}

\begin{lemma}
\label{at most n children lemma}
For any \( v \in V_n \), if the shape \( \lambda_v \coloneqq {\sf sh}(v) \) has two columns with the same height, then $\left| \psi_n^{-1}(v) \right| \leq n$, i.e., $v$ has at most $n$ children.
\end{lemma}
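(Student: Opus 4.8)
The plan is to show that at least one of the $n+1$ permutations in $\varphi_n^{-1}(v)$ fails to lie in $V_{n+1}$; since the children of $v$ are exactly $\psi_n^{-1}(v)=\varphi_n^{-1}(v)\cap V_{n+1}$, this gives $|\psi_n^{-1}(v)|\le n$. Recall from the proof of Lemma~\ref{lemma:inverse} that $\varphi_n^{-1}(v)=\{{\sf Flat}(v\,k^*) : k\in\{0,1,\dots,n\}\}$, a set of exactly $n+1$ permutations in $S_{n+1}$. By Proposition~\ref{prop:relative-order}, ${\sf Flat}(v\,k^*)\simeq v_1 v_2 \dots v_n\, k^*$, and since (as in the proof of Lemma~\ref{lemma restriction}) whether a bump is lateral depends only on the relative order of the inserted word, ${\sf Flat}(v\,k^*)\in V_{n+1}$ if and only if the Schensted insertion of $v_1 v_2 \dots v_n\, k^*$ has no lateral bump. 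But the first $n$ steps of that insertion constitute the Schensted insertion of $v\in V_n$, which produces its insertion tableau $P$ with no lateral bumps; hence it suffices to exhibit one $k\in\{0,1,\dots,n\}$ for which inserting the value $k^* = k+\frac{1}{2}$ into $P$ produces a lateral bump.

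To find such a $k$, I would first reduce the hypothesis to consecutive columns: since column heights of a Young diagram are weakly decreasing, two columns of $\lambda_v$ of equal height force two \emph{consecutive} columns, say columns $j-1$ and $j$ with $j\ge 2$, to have a common height $h$. Write $e_{1,1}<e_{1,2}<\dots<e_{1,\lambda_1}$ for the strictly increasing first (bottom) row of $P$, so that $\lambda_1\ge j$, and set $k\coloneqq e_{1,j-1}\in\{1,\dots,n\}$. Since $e_{1,j-1}<e_{1,j}$, we have $e_{1,j-1}<k^*<e_{1,j}$, so inserting $k^*$ into the first row of $P$ bumps out the entry $e_{1,j}$; thus the bumping route begins in column $j$.

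Next I would trace this bumping route upward through $P$. At every higher row the route either moves strictly to the left — in which case it already contains a lateral bump and we are finished — or it remains in column $j$; so assume the route walks straight up column $j$, bumping in turn $e_{1,j}, e_{2,j}, \dots, e_{h,j}$ (column $j$ has exactly $h$ boxes). The last of these, $e_{h,j}$, is then pushed out of the top of column $j$ into row $h+1$. The crux is that row $h+1$ cannot receive a box in column $j$: because columns $j-1$ and $j$ each contain $h$ boxes, every column of index at least $j-1$ contains at most $h$ boxes, so the length of row $h+1$ — the number of columns containing at least $h+1$ boxes — is at most $j-2$. Consequently $e_{h,j}$ lands in row $h+1$ in some column at most $j-1<j$ (it is either appended at the end of that row or bumps a further entry there), so the bump from $(h,j)$ into row $h+1$ is lateral. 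In either case the insertion of $k^*$ into $P$ contains a lateral bump, so ${\sf Flat}(v\,k^*)\notin V_{n+1}$ and $|\psi_n^{-1}(v)|\le n$, as desired.

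The step I expect to require the most care is the last paragraph: justifying the dichotomy that at each row the route either moves strictly left or stays in column $j$ (which follows from the strict increase of $P$ along columns), and checking that the route reaches the very top of column $j$ before "overflowing", so that it overflows exactly into row $h+1$, whose length is then pinned down by the equal-height hypothesis. The degenerate case $h=1$ needs no separate treatment: there the route simply bumps $e_{1,j}=e_{h,j}$ directly into row $2=h+1$.
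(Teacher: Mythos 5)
Your proposal is correct and follows essentially the same route as the paper: both choose the witness ${\sf Flat}(v\,k^*)$ with $k$ the first-row entry of the left one of two consecutive equal-height columns, and both argue that the resulting bumping route, if it does not already turn left, must overflow the top of that column into a strictly shorter row, forcing a lateral bump. Your write-up just makes explicit the ``gets stuck at the top of the column'' step that the paper states informally.
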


\begin{proof}
Suppose that \( k^{\text{th}} \) and \( (k+1)^{\text{th}} \) columns of \( \lambda_v \coloneqq {\sf sh}(v) \) have the same height (see Figure~\ref{fig:2column}). Let \( a_1, a_2, \dots, a_L \) be the entries in the first row of \( \lambda_v \), and define
\[ w \coloneqq {\sf Flat}(v \, a_k^*) \in S_{n+1}.\]
In the final insertion of \( v \, a_k^* \), i.e., inserting \( a_k^* \) into the tableau \( \lambda_v \), the number \( a_k^* \) replaces \( a_{k+1} \), bumping \( a_{k+1} \) to the second row. If there are consecutive vertical bumps in the \( (k+1)^{\text{th}} \) column, then the bumping process will get stuck at the top of the column. Therefore, the final insertion causes a lateral bump. Since lateral bumps depend only on the relative order of entries, the Schensted insertion of \( w \) also undergoes a lateral bump. Thus, \( \left| \psi_n^{-1}(v) \right| \leq n \), meaning \( v \) has at most \( n \) children. \qedhere
\end{proof}

\begin{lemma}
\label{lim lemma}
\[
\lim_{n \to \infty}\left(\frac{1}{2} \cdot \frac{3}{4} \cdots \frac{2n-1}{2n} \right) = 0.
\]
\end{lemma}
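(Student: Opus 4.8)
The plan is to show that the product $P_n \coloneqq \prod_{k=1}^{n} \frac{2k-1}{2k}$ tends to $0$ by bounding it above by a quantity that obviously goes to $0$. The cleanest route is the standard trick of comparing $P_n$ with the ``complementary'' product $Q_n \coloneqq \prod_{k=1}^{n} \frac{2k}{2k+1}$. Termwise one has $\frac{2k-1}{2k} < \frac{2k}{2k+1}$ for every $k \geq 1$ (cross-multiplying, this is $(2k-1)(2k+1) = 4k^2 - 1 < 4k^2$), so $P_n < Q_n$, and therefore $P_n^2 < P_n Q_n$. But the product $P_n Q_n$ telescopes spectacularly:
\[
P_n Q_n = \frac{1}{2}\cdot\frac{2}{3}\cdot\frac{3}{4}\cdot\frac{4}{5}\cdots\frac{2n-1}{2n}\cdot\frac{2n}{2n+1} = \frac{1}{2n+1}.
\]
Hence $P_n^2 < \frac{1}{2n+1}$, i.e. $0 < P_n < \frac{1}{\sqrt{2n+1}}$, and the squeeze theorem gives $P_n \to 0$.

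The key steps, in order, are: (1) define $Q_n$ and verify the termwise inequality $\frac{2k-1}{2k} < \frac{2k}{2k+1}$, which is the elementary computation $4k^2 - 1 < 4k^2$; (2) conclude $P_n^2 < P_n Q_n$ since all factors are positive; (3) observe that $P_n Q_n$ is a telescoping product equal to $\frac{1}{2n+1}$, by interleaving the two lists of factors; (4) deduce $0 < P_n < (2n+1)^{-1/2}$ and apply the squeeze theorem. An alternative, essentially equivalent, approach would be to take logarithms and use $\log\frac{2k-1}{2k} = \log\left(1 - \frac{1}{2k}\right) \leq -\frac{1}{2k}$, so that $\log P_n \leq -\frac{1}{2}\sum_{k=1}^n \frac{1}{k} \to -\infty$; this also works but requires citing the divergence of the harmonic series, whereas the Wallis-style telescoping argument is fully self-contained and matches the elementary spirit of the paper.

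I expect essentially no obstacle here: the only thing to be a little careful about is writing the telescoping step cleanly, making sure the numerators of one product cancel against the denominators of the other in the right pattern, leaving only the first numerator $1$ and the last denominator $2n+1$. Everything else is immediate, so the whole proof should be three or four lines.
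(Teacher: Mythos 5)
Your proof is correct, but it takes a genuinely different route from the paper's. The paper identifies the product with the closed form $a_n = \frac{(2n)!}{2^{2n}(n!)^2}$ and invokes Stirling's approximation to conclude $a_n \sim \frac{1}{\sqrt{\pi n}} \to 0$. You instead use the elementary Wallis-type comparison: pairing $P_n$ with $Q_n = \prod_{k=1}^n \frac{2k}{2k+1}$, noting $P_n < Q_n$ termwise, and telescoping $P_n Q_n = \frac{1}{2n+1}$ to get $P_n < (2n+1)^{-1/2}$. Your argument is fully self-contained and avoids citing Stirling, which arguably fits the paper's stated preference for elementary arguments even better; the paper's approach, in exchange, yields the sharp asymptotic constant $\frac{1}{\sqrt{\pi n}}$, which is not needed here. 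Both establish the same $n^{-1/2}$ decay rate, and either suffices for the application in the proof of Theorem~\ref{main thm}.
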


\begin{proof}
Let
\[
a_n = \frac{1}{2} \cdot \frac{3}{4} \cdots \frac{2n-1}{2n} = \frac{(2n)!}{2^{2n} (n!)^2}.
\]
We recall Stirling's approximation
\[
\lim_{n \to \infty} \frac{\sqrt{2 \pi n} \left(\frac{n}{e}\right)^n}{n!}=1.
\]
By applying the approximation, we obtain
\[
\lim_{n \to \infty} a_n = \lim_{n \to \infty} \frac{\sqrt{4 \pi n} \left(\frac{2n}{e}\right)^{2n}}{2^{2n} (2 \pi n) \left(\frac{n}{e}\right)^{2n}} = \lim_{n \to \infty} \frac{1}{\sqrt{\pi n}} = 0. \qedhere
\]
\end{proof}

\begin{proof}[\textbf{Proof of Theorem~\ref{main thm}}]

Define \( p_n \coloneqq \frac{|V_n|}{n!} \). For all \( n \in \mathbb{N} \), using~\eqref{eq 2}, we have
\[
p_{n+1} = \frac{|V_{n+1}|}{(n+1)!} \leq \frac{(n+1)|V_n|}{(n+1)!} = p_n.
\]
Thus, the sequence \( \{p_n\}_{n=1}^{\infty} \) is monotone decreasing and bounded below by \( 0 \). Therefore, it converges to some limit \( \epsilon \geq 0 \). Suppose \( \epsilon > 0 \). By Lemma~\ref{L(lambda) lemma}, for any \( \delta > 0 \), there exists \( N \in \mathbb{N} \) such that
\[
\mathbb{P}_n^*\left(L\left(\lambda \right) \geq \sqrt{2n}\right) > 1 - \delta, \quad \text{for all} \quad n \geq N.
\]
By the relation of the measures \( \mathbb{P}_n \) on \( S_n \) and \( \mathbb{P}_n^* \) on \( {\sf Par}(n) \) in Section~\ref{subsection limit shape theorem}, we obtain
\[
\mathbb{P}_n\left(L\left(\lambda_w \right) \geq \sqrt{2n}\right) =
\mathbb{P}_n^*\left(L\left(\lambda \right) \geq \sqrt{2n}\right) > 1 - \delta, \quad \text{for all} \quad n \geq N.
\]
We denote \( H_n \) as the set of permutations \( w \in S_n \) such that \( \lambda_w \) has columns of different heights. By Lemma~\ref{same height}, if \( L(\lambda_w) \geq \sqrt{2n} \), then \( \lambda_w \) has two columns of the same height, i.e., \( w \notin H_n \). Therefore, we obtain
\[
\mathbb{P}_n\left( w \notin H_n \right) \geq
\mathbb{P}_n\left(L\left(\lambda_w \right) \geq \sqrt{2n}\right) >
1 - \delta, \quad \text{for all} \quad n \geq N.
\]
This implies that
\[
\mathbb{P}_n\left( w \in H_n \cap V_n \right) \leq \mathbb{P}_n\left( w \in H_n \right) < \delta, \quad \text{for all} \quad n \geq N.
\]
We denote \( U_n \) as the set of permutations in \( V_n \) that have \( n+1 \) children. By Lemma~\ref{at most n children lemma}, if \( w \in U_n \), then \( w \in H_n \). Thus, we obtain
\[
|U_n| \leq |V_n \cap H_n| = n! \times \mathbb{P}_n\left( w \in H_n \cap V_n \right) < n! \times \delta.
\]
Choosing \( \delta = \frac{1}{2} \epsilon \), we have
\[
|U_n| < \delta n! = \frac{1}{2} \epsilon n! \leq \frac{1}{2} |V_n|,
\]
because \( \frac{|V_n|}{n!} = p_n \geq \epsilon \). By~\eqref{eq 1} and the definition of $U_n$, any \( w \in V_n \setminus U_n \) has at most \( n \) children. Thus,
\[
|V_{n+1}| \leq n \cdot |V_n \setminus U_n| + (n+1) \cdot |U_n| \leq \left(n + \frac{1}{2} \right) |V_n|.
\]
Therefore, we obtain
\[
p_{n+1} \leq \frac{n + \frac{1}{2}}{n+1} \, p_n = \left(1 - \frac{1}{2(n+1)}\right) p_n.
\]
Since this inequality holds for all \( n \geq N \), we can apply it repeatedly as follows:
\[
\begin{aligned}
p_n & \leq \left(1 - \frac{1}{2n} \right) p_{n-1} \\
    & \leq \left(1 - \frac{1}{2n} \right) \left(1 - \frac{1}{2(n-1)} \right) p_{n-2} \\
    & \quad \vdots \\
    & \leq \left(1 - \frac{1}{2n} \right) \left(1 - \frac{1}{2(n-1)} \right) \cdots \left(1 - \frac{1}{2(N+1)} \right) p_N \\
    & = \prod_{i=N+1}^{n} \left(1 - \frac{1}{2i} \right) p_N.
\end{aligned}
\]
From Lemma~\ref{lim lemma}, we obtain
\[
p_n \leq \prod_{i=N+1}^{n} \left(1 - \frac{1}{2i} \right) p_N = \frac{a_n}{a_N} p_N \longrightarrow 0 \quad \text{as} \quad n \longrightarrow \infty.
\]
Therefore, by the squeeze theorem, we conclude that
\[
\lim_{n \to \infty} p_n = 0.
\]
This leads to a contradiction, as we initially assumed \( \lim_{n \to \infty} p_n = \epsilon > 0 \). Hence, we must have \( \epsilon = 0 \). \qedhere
\end{proof}

\section*{Acknowledgment}
We sincerely thank Alexander Yong for his valuable discussions and significant contributions to the editing of this paper. We are also grateful to Victor Reiner for his insightful discussions, particularly during his visit to UIUC. David Xia was supported by the ICLUE program, funded through the NSF RTG grant DMS-1937241.

\bibliographystyle{abbrv}
\bibliography{bibliography.bib}

\end{document}